\documentclass[12pt]{amsart}

\usepackage{hyperref}
\usepackage{setspace}
\usepackage{latexsym}
\usepackage{amsmath}
\usepackage{amsfonts}
\usepackage{amssymb}
\usepackage{amsthm}
\usepackage{verbatim}

\usepackage[left=3cm,top=2cm,right=3cm,bottom = 2cm]{geometry}

\footskip=.8in

\newcommand{\hstar}{\bb{H}^\ast}

\newcommand{\uptrithree}[6]{\left(\begin{array}{ccc}#1&#2&#3\\0&#4&#5\\0&0&#6\end{array}\right)}
\newcommand{\threemat}[9]{\left(\begin{array}{ccc}#1&#2&#3\\#4&#5&#6\\#7&#8&#9\end{array}\right)}
\newcommand{\cstar}{\bb{C}^\times}

\newcommand{\diag}[1]{\mbox{diag}\left\{#1\right\}}

\newcommand{\eqr}[1]{\mbox{(\ref{eq:#1})}}

\newcommand{\e}[1]{\textbf{e}\left(\textstyle#1\right)}

\newcommand{\G}{\Gamma }

\newcommand{\mat}[4]{\left(\begin{array}{rr}#1&#2\\#3&#4\end{array}\right)}

\newcommand{\bb}[1]{\mathbb{#1}}

\newcommand{\gln}[1]{\GL_{#1}(\bb{C})}

\newtheorem{thm}{Theorem}[section]

\newtheorem{cor}[thm]{Corollary}
\newtheorem{prop}[thm]{Proposition}


\DeclareMathOperator{\SL}{SL}
\newcommand{\ZZ}{\mathbb{Z}}
\DeclareMathOperator{\GL}{GL}
\newcommand{\CC}{\mathbb{C}}
\usepackage{color}
\newcommand{\cV}{\mathcal{V}}

\newcommand{\cMbar}{\overline{\cM}}
\newcommand{\cM}{\mathcal{M}}
\newcommand{\cL}{\mathcal{L}}

\DeclareMathOperator{\PSL}{PSL}
\DeclareMathOperator{\Tr}{Tr}
\usepackage{caption}

\newcommand{\smalltwobytwo}[4]{
\left( \begin{smallmatrix} 
  #1 & #2\\
  #3 & #4 
\end{smallmatrix}\right)}

\newcommand{\twobytwo}[4]{
\left( \begin{array}{cc} 
  #1 & #2\\
  #3 & #4 
\end{array} \right)}

\usepackage{xcolor}
\hypersetup{
    colorlinks,
    linkcolor={red!50!black},
    citecolor={blue!50!black},
    urlcolor={blue!80!black}
}

\newtheoremstyle{def}
     {10pt}
     {10pt}
     {}
     {}
     {\rmfamily\bfseries\upshape}
     {.}
     {.5em}
     {}
 \theoremstyle{def}
\newtheorem{rem}[thm]{Remark}
\newtheorem{defn}[thm]{Definition}

\begin{document}

\title[Indecomposable vector-valued modular forms]{Indecomposable vector-valued modular forms and periods of modular curves}

\author[L. Candelori]{Luca Candelori}
\author[T. Hartland]{Tucker Hartland}
\author[C. Marks]{Christopher Marks}
\author[D. Y\'{e}pez]{Diego Y\'{e}pez}

\address{Department of Mathematics, University of Hawaii, 2565 McCarthy Mall, Honolulu, HI 96822, USA}
\email{candelori@math.hawaii.edu}

\address{Department of Applied Mathematics, University of California, Merced, 5200 N Lake Road, Merced, CA, 95343}
\email{thartland@ucmerced.edu}

\address{Department of Mathematics and Statistics, California State University, Chico, 400 West First Street, Chico, CA 95929, USA}
\email{cmarks@csuchico.edu}

\address{}
\email{dyepez19@yahoo.com }

\keywords{Indecomposable representations, modular forms, periods}

\begin{abstract} We classify the three-dimensional representations of the modular group that are reducible but indecomposable, and their associated spaces of holomorphic vector-valued modular forms. We then demonstrate how such representations may be employed to compute periods of modular curves. This technique obviates the use of Hecke operators, and therefore provides a method for studying noncongruence modular curves as well as congruence.
\end{abstract}

\maketitle

\section{Introduction}

For a long while, modular forms have been an indispensable tool in the theory of numbers. Perhaps in part because Frobenius was so separated in time from Jacobi, Eisenstein, and the other early adopters of modularity, the use of representation theory of the modular group as a means for studying modular forms has been a comparatively recent development. In a certain sense, this point of view is entirely natural since if $G$ is a normal subgroup of the modular group $\G=\PSL_2(\bb{Z})$ and $k$ is any even integer, then $\G$ acts on the space of weight $k$ modular forms for $G$ (either meromorphic or holomorphic) via the usual ``slash'' action
$$f\mapsto f|_k\gamma(\tau)=f\left(\frac{a\tau+b}{c\tau+d}\right)(c\tau+d)^{-k}$$
in weight $k$ that defines (weak) modularity on $G$. Selberg \cite{Selberg}, for example, made good use of this point of view in improving bounds on the growth of Fourier coefficients of cusp forms for arbitrary finite index subgroups of $\G$, and later on Eichler and Zagier \cite{EZ} pointed out that one may define Jacobi forms using this point of view together with Jacobi theta functions. 

More recently, the importance of this action of the full modular group on spaces of modular forms has been made clear in the growing unification of number theory and high energy physics, where e.g.\ the work of Zhu \cite{Zhu} shows that certain vector-valued modular forms have Fourier coefficients that count the dimensions of graded spaces of modules for rational vertex operator algebras (VOAs). Largely motivated by the connection with rational VOAs, Knopp and Mason \cite{KM} initiated a formal study of vector-valued modular forms, and this has led to a significant amount of ongoing research in the subject. 

One of the more novel aspects of this representation theoretic approach to modular forms is that one may study more general functions that attain modular invariance only on an infinite index subgroup of $\G$. Since the Riemann surface that arises from the action of such a subgroup on the union $\hstar$ of the complex upper half-plane $\bb{H}$ and the cusps $\bb{Q}\cup\{i\infty\}$ of $\G$ is not compact, these functions are not constrained by e.g.\ Liouville's theorem, and consequently one may observe in this situation modular functions that are holomorphic throughout all of $\hstar$. A classic example of this occurs when one fixes a base point $\tau_0\in\hstar$ and integrates a weight two cusp form $f$ on a finite index subgroup $G\leq\G$, obtaining a holomorphic function $u(\tau)=\int_{\tau_0}^\tau f(z)\,dz$ that first becomes modular on some infinite index subgroup of $G$. The periods associated with $f$ for the modular curve $G\backslash\hstar$ are then obtained by evaluating $u$ at $\sigma\tau_0$ for appropriate hyperbolic elements $\sigma\in G$. In general it is not so easy to determine explicitly these periods but, as we demonstrate in Section \ref{section:periods} below, one may obtain them by studying the action of $\G$ on the full space $S_2(G)$ of weight two cusp forms for $G$. In this way, one realizes these periods as columns of the matrices $\rho(\sigma)$, where $\rho$ is a representation of $\G$ that encodes the action of $\G$ on the integrals of the weight two cusp forms and $\sigma$ denotes the above-mentioned hyperbolic elements of $G$. This provides what appears to be a novel method for obtaining periods of modular curves, one that in particular does not require the use of Hecke operators. This allows one to obtain periods for noncongruence modular curves, i.e.\ curves $G\backslash\hstar$ with $G$ a finite index,  noncongruence subgroup of $\G$, even though the action of the Hecke algebra associated to such a subgroup is known to be defective \cite{Thompson}.

The representations of the modular group that occur in this context are indecomposable but reducible, a phenomenon that can occur only in the infinite image setting, and in order to reliably compute periods of modular curves of genus $g$ via this method one requires a classification of representations $\rho:\G\rightarrow\gln{g+1}$ of the form
\begin{equation}
\label{eq:RhoExtension}
0 \rightarrow \rho_0 \rightarrow \rho \rightarrow \rho_1 \rightarrow 0,
\end{equation}
where the $g$-dimensional subrepresentation $\rho_0$ gives the action of $\G$ on $S_2(G)$ (here the genus $g$ subgroup $G$ is assumed to be normal in $\G$ for simplicity) and the quotient $\rho_1=1$ is trivial. Thus we are motivated to study the more general problem of classifying representations \eqr{RhoExtension} together with their associated spaces $M(\rho)$ of holomorphic vector-valued modular forms. The {\em free module theorem} for vector-valued modular forms \cite{MarksMason} asserts that in this context $M(\rho)$  is free of rank $r=\dim\rho$ over the graded ring $R=M(1)$ of scalar-valued modular forms for $\G$. Thus the classification of the $M(\rho)$ comes down to determining the weights of the $r$ free generators for $M(\rho)$, the so-called {\em generating weights} for $\rho$. For representations of low dimension, it is indeed possible to list all such representations $\rho$ and their corresponding generating weights. For dimension two, this classification has already been carried out in \cite[Sec 4]{MarksMason}, so a primary aim of this article is to accomplish a similar classification for representations \eqr{RhoExtension} of dimension three. In Section \ref{section:3DimIndecomposable} below we classify all such extensions, and the results may be summarized as follows.
\begin{thm}\label{thm:unitmain}
Let $\rho:\G\rightarrow \GL_3(\CC)$ be as in \eqref{eq:RhoExtension} with $\rho_0$ two-dimensional and suppose $\rho\smalltwobytwo{1}{1}{0}{1}$ is diagonalizable. Then up to equivalence there are 
\begin{itemize}
\item[(i)] 6 $\rho$ with $\rho_0$ a direct sum of characters $\G\rightarrow\cstar$, as tabulated in Table \ref{table1} below.
\item[(ii)] 18 $\rho$ with $\rho_0$ indecomposable but reducible, as tabulated in Tables \ref{table2} and \ref{table3} below.
\item[(iii)] 2 $\rho$ for any choice of irreducible $\rho_0$ such that the ratio of the eigenvalues of $\rho_0(T)$ is different from -1 (there are infinitely many such choices).
\end{itemize}
\end{thm}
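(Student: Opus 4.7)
The plan is to parametrize every $\rho$ fitting \eqref{eq:RhoExtension} (with $\rho_1 = 1$) by a $1$-cocycle on $\Gamma$ with values in $\rho_0$, compute the relevant $H^1$, and then enumerate isomorphism classes by quotienting by the induced action of $\mathrm{Aut}(\rho_0)$. Writing $\Gamma = \langle S\rangle * \langle R\rangle \cong \ZZ/2 * \ZZ/3$ with $R = ST$ and $T = SR$, and choosing a vector-space splitting of $\rho \twoheadrightarrow \rho_1 = 1$, every such $\rho$ takes the block form
\[
\rho(\gamma) = \begin{pmatrix} \rho_0(\gamma) & c(\gamma) \\ 0 & 1 \end{pmatrix},
\]
with $c(\gamma\delta) = c(\gamma) + \rho_0(\gamma) c(\delta)$. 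Because $\Gamma$ is a free product of finite cyclic groups, $c$ is freely determined by $(c(S), c(R))$ subject only to $(I + \rho_0(S))c(S) = 0$ and $(I + \rho_0(R) + \rho_0(R)^2)c(R) = 0$, with $1$-coboundaries $c(\gamma) = (\rho_0(\gamma) - I)w$ for $w \in \CC^2$. Equivalently, the Mayer--Vietoris sequence for $\ZZ/2 * \ZZ/3$ identifies
\[
H^1(\Gamma, \rho_0) \cong \CC^2 / (\rho_0^S + \rho_0^R),
\]
where $\rho_0^S, \rho_0^R$ denote the fixed subspaces of $\rho_0(S), \rho_0(R)$. Equivalence of $3$-dimensional representations amounts to the quotient of $H^1(\Gamma, \rho_0)$ by $\mathrm{Aut}(\rho_0)$. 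Finally, diagonalizability of $\rho(T)$ forces $\rho_0(T)$ to be diagonalizable and imposes the additional linear condition $c(T) \in (\rho_0(T) - I)\CC^2$ precisely when $1$ is an eigenvalue of $\rho_0(T)$.

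\textit{Case (iii).} For $\rho_0$ irreducible, $\rho_0(S)$ has eigenvalues $\{+1,-1\}$, $\rho_0(R)$ has two distinct eigenvalues among $\{1,\omega,\omega^2\}$, and irreducibility forces $\rho_0^S \neq \rho_0^R$. A short count gives $\dim H^1(\Gamma, \rho_0) = 1$ exactly when $\rho_0(R)$ has eigenvalues $\{\omega, \omega^2\}$, in which case $\det\rho_0(T) = -1$ and Schur's lemma gives $\mathrm{Aut}(\rho_0) = \CC^\times$ acting on $H^1$ by scaling, producing exactly two orbits. One then checks that the condition ``eigenvalue ratio of $\rho_0(T)$ different from $-1$'' coincides with $1 \notin \mathrm{spec}(\rho_0(T))$, so that $\rho(T)$ is automatically diagonalizable for both the split and the nonsplit extension, yielding the two $\rho$ of (iii) for each admissible $\rho_0$.

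\textit{Cases (i) and (ii).} For $\rho_0$ semisimple, $\rho_0 = \chi_1 \oplus \chi_2$, the cocycle computation decouples: $\dim H^1(\Gamma, \chi) = 1$ if $\chi(S) = -1$ and $\chi(R) \neq 1$, otherwise $0$. Enumerating unordered pairs of characters and orbits of $\mathrm{Aut}(\rho_0)$ (which is $\GL_2(\CC)$ if $\chi_1 = \chi_2$ and $(\CC^\times)^2$ otherwise) on $H^1(\Gamma, \chi_1) \oplus H^1(\Gamma, \chi_2)$, together with the $T$-diagonalizability condition, recovers Table~\ref{table1}. For $\rho_0$ indecomposable but reducible, $\rho_0$ is itself a nonsplit extension of one character by another; the resulting finite list of $\rho_0$ is already known (cf.\ \cite[\S 4]{MarksMason}). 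For each such $\rho_0$ one performs the analogous cocycle-and-orbit computation in a basis adapted to the filtration and enforces diagonalizability of $\rho(T)$, arriving at the $18$ cases of Tables~\ref{table2}--\ref{table3}.

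\textbf{Main obstacle.} Case (ii) carries the bulk of the technical work: the cocycle computation does not split into character-by-character pieces, $\mathrm{Aut}(\rho_0)$ is a solvable group of block upper-triangular matrices with more intricate orbits than $\GL_2(\CC)$ or $\CC^\times$, and the fact that $\rho_0(T)$ may itself fail to be diagonalizable makes the $T$-diagonalizability condition on $\rho(T)$ considerably more delicate to impose. The proof therefore reduces to a careful case-by-case bookkeeping for each indecomposable $2$-dimensional $\rho_0$ and each cocycle-orbit representative, matching the entries of the tables.
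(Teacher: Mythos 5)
Your cohomological framework ($\Gamma\cong\ZZ/2\ast\ZZ/3$, cocycles determined by $(c(S),c(R))$, $H^1(\Gamma,V)\cong V/(V^S+V^R)$, then quotienting by automorphisms) is sound and is a genuinely different route from the paper's, which instead writes $\rho(S)$ as an explicit block upper-triangular matrix, imposes $\rho(S)^2=\rho(ST)^3=I$ entry by entry, and tests indecomposability via an explicit change-of-basis criterion (Proposition \ref{prop:decomposabilityCriterion}). But there is a genuine gap: you fix $\rho_1=1$, whereas the theorem counts extensions $0\to\rho_0\to\rho\to\chi\to 0$ for an \emph{arbitrary} character $\chi$ of $\Gamma$ --- look at the third column of Tables \ref{table1}--\ref{table3}, where $\lambda_3=\chi(T)=\e{x_3/6}$ is usually not $1$. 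The relevant group is $\mathrm{Ext}^1(\chi,\rho_0)\cong H^1(\Gamma,\rho_0\otimes\chi^{-1})$, and the enumeration must run over all six characters $\chi$. With $\chi=1$ forced, your case (iii) fails concretely: $H^1(\Gamma,\rho_0)\neq 0$ only when $\rho_0(ST)$ has eigenvalues $\{\omega,\omega^2\}$, so for the infinitely many irreducible $\rho_0$ with $\det\rho_0(ST)\neq 1$ you would find no nonsplit extension at all, contradicting part (iii). The correct count is that for \emph{every} irreducible $\rho_0$ there are exactly two characters $\chi$ with $H^1(\Gamma,\rho_0\otimes\chi^{-1})\neq 0$, namely those with $\chi(ST)=\det\rho_0(ST)^{-1}$ and $\chi(S)=\pm 1$ (equivalently $\chi(T)^2=-\det\rho_0(T)$, matching the paper's $\lambda_3^2=-\lambda_1\lambda_2$), each contributing one nonsplit class after dividing by $\CC^\times$. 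Relatedly, your ``two orbits'' are the zero and nonzero orbits of a single $H^1\cong\CC$, i.e.\ you are counting the split extension $\rho_0\oplus 1$ as one of the two $\rho$ of (iii); but both representations in (iii) are indecomposable (cf.\ Theorem \ref{thm:Irreducible2DimSubFactor}) and arise from the two admissible $\chi$, not from the pair $\{\text{split},\text{nonsplit}\}$.

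Two smaller points. The identification ``isomorphism classes of $\rho$ $=$ $\mathrm{Aut}(\rho_0)\times\mathrm{Aut}(\chi)$-orbits on $H^1$'' is only automatic when the invariant $2$-plane of $\rho$ is unique (true for $\rho_0$ irreducible); when $\rho_0$ is decomposable or shares constituents with $\chi$ one must also quotient by isomorphisms that do not preserve the chosen filtration --- this is exactly the identification $\lambda_2\leftrightarrow\lambda_3$ that the paper performs by hand in the $y=0$ subcase, and omitting it will over-count in cases (i)--(ii). Finally, for cases (i) and (ii) the proposal defers the entire matching with the tables (``recovers Table \ref{table1}'', ``arriving at the 18 cases''), so as written it is a strategy rather than a proof for the parts of the statement that carry most of the content.
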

The assumption that $\rho\smalltwobytwo{1}{1}{0}{1}$ is diagonalizable simplifies some of the arguments, and for our application to the computations of periods of modular curves it suffices to classify such representations. A similar classification may be worked out for indecomposable $\rho$ with $\rho\smalltwobytwo{1}{1}{0}{1}$ not diagonalizable, but we omit this for the sake of brevity. 

By studying the dual representations of the above $\rho$, one is easily led to the following result which covers the cases when $\rho_0$ is one-dimensional:
\begin{thm}\label{thm:onedim}
Suppose that $\dim(\rho_0)=1$ in \eqref{eq:RhoExtension}. Then $\rho$ is equivalent to one of the representations in $(i)$ or $(ii)$ of Theorem \ref{thm:unitmain} if and only if $\rho$ admits a two-dimensional invariant subspace. Otherwise the quotient representation $\rho_1$ is irreducible and $\rho$ is the dual of one of the representations in $(iii)$ of Theorem \ref{thm:unitmain}.
\end{thm}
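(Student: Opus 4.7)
The plan is to dualize and invoke Theorem \ref{thm:unitmain}. Given $\rho$ with $\dim\rho_0 = 1$ in \eqref{eq:RhoExtension}, dualizing produces the short exact sequence
$$0 \to \rho_1^\vee \to \rho^\vee \to \rho_0^\vee \to 0,$$
so $\rho^\vee$ is a three-dimensional representation with two-dimensional subrepresentation $\rho_1^\vee$ and one-dimensional quotient. Since $\rho^\vee(T) = (\rho(T)^{-1})^{\mathrm{T}}$ and both inversion and transposition preserve diagonalizability, $\rho^\vee\smalltwobytwo{1}{1}{0}{1}$ is diagonalizable whenever $\rho\smalltwobytwo{1}{1}{0}{1}$ is. Thus $\rho^\vee$ satisfies the hypotheses of Theorem \ref{thm:unitmain} with $\rho_1^\vee$ playing the role of the two-dimensional subrepresentation, and so $\rho^\vee$ is equivalent to a representation in one of the cases (i), (ii), or (iii) of that theorem. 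Equivalently, $\rho$ is equivalent to the dual of such a representation.

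The key translation is the standard annihilator bijection $W \mapsto W^\perp$ between $k$-dimensional invariant subspaces of $\rho$ and $(3-k)$-dimensional invariant subspaces of $\rho^\vee$. In particular, $\rho$ admits a two-dimensional invariant subspace if and only if $\rho^\vee$ admits a one-dimensional invariant subspace. The dichotomy in Theorem \ref{thm:onedim} then follows from a case analysis on $\rho^\vee$.

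In cases (i) and (ii) of Theorem \ref{thm:unitmain}, the two-dimensional subrepresentation $\rho_1^\vee$ is reducible and hence contains a one-dimensional invariant subspace; this gives $\rho^\vee$ a one-dimensional invariant subspace and so endows $\rho$ with a two-dimensional invariant subspace. Conversely, in keeping with the indecomposability convention underlying Theorem \ref{thm:unitmain}, a two-dimensional invariant subspace $W \subseteq \rho$ has annihilator $W^\perp \subseteq \rho^\vee$ that must lie inside $\rho_1^\vee$ (otherwise $W^\perp$ would project isomorphically onto $\rho_0^\vee$ and split the sequence, contradicting indecomposability of $\rho^\vee$), so $\rho_1^\vee$ is reducible and $\rho^\vee$ lies in (i) or (ii). In case (iii), $\rho_1^\vee$ is irreducible and the indecomposability of $\rho^\vee$ precludes the sequence from splitting, so $\rho^\vee$ has no one-dimensional invariant subspace, $\rho$ has no two-dimensional invariant subspace, and $\rho_1 = (\rho_1^\vee)^\vee$ is irreducible.

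The main non-formal point I expect to have to verify is that the classification in Theorem \ref{thm:unitmain} is indeed of indecomposable representations (as is strongly suggested by the abstract's framing), and that the eigenvalue-ratio condition in case (iii) is stable under taking duals, so that the phrase ``$\rho$ is the dual of one of the representations in (iii)'' is unambiguous. Beyond these bookkeeping conventions, the argument is a purely formal consequence of representation-theoretic duality combined with Theorem \ref{thm:unitmain}.
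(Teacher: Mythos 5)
Your proof is correct and follows essentially the same route as the paper: both arguments dualize $\rho$ to reduce to the classification of Theorem \ref{thm:unitmain}, identifying two-dimensional invariant subspaces of $\rho$ with one-dimensional invariant subspaces of $\rho^\vee$ (the paper does this with explicit matrices and a conjugation, you via the annihilator correspondence). The only step worth making explicit is that once $\rho$ is known to have a two-dimensional invariant subspace $W$, indecomposability forces $W\supseteq\rho_0$, so $\rho|_W$ is reducible and Theorem \ref{thm:unitmain} applies directly to $\rho$ itself, giving the ``equivalent to one of (i) or (ii)'' conclusion rather than only the corresponding statement for $\rho^\vee$.
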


In Section \ref{section:GenWeights}, the generating weights for the associated spaces of holomorphic vector-valued modular forms are worked out. A key observation here is that, even though  $\rho$ is a non-trivial extension, it is still possible that the corresponding module of modular forms $M(\rho)$ splits as the direct sum $M(\rho_0)\oplus M(\rho_1)$. In this case we say that $\rho$ is {\em $M$-split}, and the generating weights of $\rho$ are just the union of the generating weights of $\rho_0$ and $\rho_1$. To decide when $\rho$ is $M$-split we develop sheaf-theoretic cohomological tools based on the geometric theory of vector-valued modular forms \cite{CandeloriFranc}.  It should be noted that, concurrently to the writing of this article, similar tools have been employed to analyze the structure of vector-valued modular forms for indecomposable representations of genus zero subgroups of $\Gamma$ in \cite{Genus0}. Applying these geometric methods, we obtain a list of representations $\rho$ that are $M$-split in Theorems \ref{theorem:rho0IrreducibleGenWeights}, \ref{thm:rho0indecomposablegeneratingweights}, \ref{thm:rho0CompletelyReducibleGeneratingWeights}  and \ref{thm:rho0oneDimGenWeights} below. We summarize these results as follows.
\begin{thm}
Let $\rho:\G\rightarrow \GL_3(\CC)$ be as in \eqref{eq:RhoExtension}. Then
\begin{itemize}
\item [(i)] If $\rho_0$ is two-dimensional and completely reducible, there is 1 isomorphism class of $\rho$ that is not $M$-split, and all the other classes are $M$-split. 
\item[(ii)]If $\rho_0$ is two-dimensional, indecomposable but reducible, there is 1 isomorphism class of $\rho$ that is not $M$-split, and all the other classes are $M$-split. 
\item[(iii)] For each $\rho_0$ two-dimensional, irreducible of finite image with generating weights $k_1=8, k_2=10$, there is one isomorphism class of $\rho$ that is not $M$-split. For all other $\rho_0$ irreducible of finite image, $\rho$ is $M$-split.\qed
\end{itemize}
\end{thm}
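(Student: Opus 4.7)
The plan is to reinterpret the $M$-splitness of an extension \eqref{eq:RhoExtension} as a cohomological vanishing condition on the compactified modular stack $\cMbar$, via the equivalence between finite-dimensional complex representations of $\Gamma$ and vector bundles on $\cMbar$ developed in \cite{CandeloriFranc}. Under this equivalence \eqref{eq:RhoExtension} lifts to a short exact sequence
\begin{equation*}
0 \to \cV(\rho_0) \to \cV(\rho) \to \cV(\rho_1) \to 0
\end{equation*}
of vector bundles on $\cMbar$, and $\rho$ is $M$-split precisely when the functor $\bigoplus_{k\in\ZZ} H^0\bigl(\cMbar, - \otimes \omega^{\otimes k}\bigr)$ preserves exactness. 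The obstruction consists of the connecting maps
\begin{equation*}
\delta_k\colon H^0\bigl(\cMbar,\cV(\rho_1)\otimes\omega^{\otimes k}\bigr)\to H^1\bigl(\cMbar,\cV(\rho_0)\otimes\omega^{\otimes k}\bigr),
\end{equation*}
which collectively vanish if and only if the class of the extension in $\Ext^1_{\cMbar}\bigl(\cV(\rho_1),\cV(\rho_0)\bigr)$ pairs trivially against every twist by $\omega^{\otimes k}$.

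With this reformulation the strategy is case-by-case. Theorems \ref{thm:unitmain} and \ref{thm:onedim} already describe the extensions $\rho$ as vectors in the representation-theoretic group $\Ext^1_\Gamma(\rho_1,\rho_0)$; it then remains to compute the kernel of the natural comparison map
\begin{equation*}
\Ext^1_\Gamma(\rho_1,\rho_0)\longrightarrow\Ext^1_{\cMbar}\bigl(\cV(\rho_1),\cV(\rho_0)\bigr),
\end{equation*}
since the non-$M$-split classes are precisely those whose image is non-trivial (up to scalar). For (i), $\rho_0$ is a direct sum of two characters, so $\Ext^1_{\cMbar}$ splits as a sum of two line-bundle cohomology groups on $\cMbar$, and an explicit Riemann--Roch computation on the stack pins down the unique non-$M$-split class. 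For (ii) the filtration of $\cV(\rho_0)$ by the line bundle of its one-dimensional invariant subspace induces a long exact sequence of $\Ext$ groups that reduces the question to two line-bundle computations, again yielding a single non-$M$-split class.

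The main obstacle is case (iii). Here $\cV(\rho_0)$ is a rank-two indecomposable bundle attached to an irreducible finite-image $\rho_0$, and one must determine, uniformly as $\rho_0$ varies, when $\Ext^1_{\cMbar}\bigl(\cV(\rho_1),\cV(\rho_0)\bigr)$ supports a class that survives to global sections. The principal tool is the free module theorem \cite{MarksMason}, which presents $M(\rho_0)$ as a free $R$-module of rank two with generators in a pair of weights $(k_1,k_2)$, thereby translating the $\Ext$ computation into a numerical statement about $(k_1,k_2)$ together with the character data of $\rho_1$. A careful application of Serre duality on $\cMbar$, with the usual corrections for cuspidal exponents and the orbifold structure at the elliptic points, should identify $(k_1,k_2)=(8,10)$ as precisely the degree pair for which the obstruction is non-trivial, and in that case as a one-dimensional extension class up to scalar. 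The delicate bookkeeping in this duality argument, rather than any conceptual novelty, is the chief difficulty, and its resolution is packaged in Theorem \ref{theorem:rho0IrreducibleGenWeights}, from which (iii) follows.
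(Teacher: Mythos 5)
Your cohomological setup is the same as the paper's (Section \ref{section:GenWeights}): $M$-splitness is the vanishing of the graded connecting maps $\delta_k\colon H^0(\cMbar,\cV(\rho_1)\otimes\cL_k)\to H^1(\cMbar,\cV(\rho_0)\otimes\cL_k)$, and a Riemann--Roch/vanishing computation (the paper's Theorem \ref{theorem:keyMSplitProp}, using $\cV(\rho_0)\simeq\oplus_i\cL_{-k_i}$ and $H^1(\cMbar,\cL_m)=0$ for $m\in\{0,-1,\dots,-9,-11\}$) does narrow the non-split candidates to the cases where $\rho_0$ has generating weight $10$ and $\rho_1$ has generating weight $0$. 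But your pivot to the comparison map $\Ext^1_\G(\rho_1,\rho_0)\to\Ext^1_{\cMbar}(\cV(\rho_1),\cV(\rho_0))$ hides two problems. First, ``non-$M$-split iff the image is nontrivial'' is not justified in general: a nonzero class in $H^1(\cMbar,\cV(\rho_1)^{\vee}\otimes\cV(\rho_0))$ can pair to zero with every global section of every twist, and indeed the mechanism that makes almost every $\rho$ in the classification $M$-split is the vanishing of the relevant $H^0$ or $H^1$ groups, not the vanishing of the extension class. (The equivalence does happen to hold in the surviving candidate cases, but only because there $\rho_1$ contains a character of generating weight $0$, so that $\delta_0$ applied to the unit section literally returns the extension class; this needs to be said.)

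The larger gap is that nothing in your proposal shows the obstruction is actually \emph{nonzero} in the candidate cases. Riemann--Roch and Serre duality compute the dimensions of the source and target of $\delta_0$ (both one-dimensional when $(k_1,k_2)=(8,10)$ and $a=0$); they cannot decide whether a specific map between two one-dimensional spaces vanishes, which is exactly the content of parts (i)--(iii) of the statement. The paper closes this by a representation-theoretic contradiction: if $\delta=0$ then $0$ would be a generating weight of $\rho$, giving $F\in M_0(\rho)$ with $DF\in M_2(\rho)=0$, so $F$ is a constant $\rho$-invariant vector which, together with the subrepresentation $\rho_0$, forces $\rho\simeq\rho_0\oplus 1$, contradicting indecomposability (proofs of Theorems \ref{theorem:rho0IrreducibleGenWeights} and \ref{thm:rho0indecomposablegeneratingweights}); the completely reducible case $(1,5,0)$ needs the finer argument via the second-order equation $D^2F_0=0$ and \cite[Ex 22]{FrancMason2}, because there $M_2(\rho)\neq 0$ in the hypothetical $M$-split scenario $(0,2,10)$. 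You need some substitute for this step, and ``careful bookkeeping in the duality argument'' will not produce one; as written, your argument establishes only the $M$-split direction of the theorem.
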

We then treat separately each of the cases where $\rho$ is not $M$-split to compute the generating weights. Note that the same cohomological tools can be applied to study the case of general $\rho_0$, not necessarily factoring through a finite group, and even to study the case of higher-dimensional representations $\rho$. The classification when $\rho_0$ one-dimensional is similarly obtained.  

In Section \ref{section:periods}, we provide the details for the above discussion regarding the use of these representations in the computation of modular curve periods (Theorem \ref{thm:MainPeriodThm}  below), and establish the following result.
\begin{thm}
Let $G\leq\Gamma$ be a normal subgroup of finite index and genus $g$. Let $\{f_1, \ldots, f_g\}$ be a basis of the space $S_2(G)$ of weight two cusp forms for $G$, and let $\rho_0$ be the representation given by the $|_2$ action of $\G$ on $S_2(G)$. Then 
$$F(\tau) =\left(\int_{i\infty}^{\tau} f_1(z)\,dz, \ldots, \int_{i\infty}^{\tau} f_g(z)\,dz,1\right)^t$$ 
is a holomorphic vector-valued modular function for an indecomposable representation $\rho$ of dimension $g+1$ of the form 
$$
0 \rightarrow \rho_0 \rightarrow \rho \rightarrow 1 \rightarrow 0,
$$
and such that $\rho\smalltwobytwo{1}{1}{0}{1}$ is diagonalizable. 
\end{thm}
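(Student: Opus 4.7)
My plan is to derive the matrix coefficients of $\rho$ from a direct change-of-variables computation, then deduce indecomposability from the geometry of the compact modular curve $X=G\backslash\hstar$, and finally observe that $T$ fixes the base cusp of integration.

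For the transformation law, fix $\gamma=\abcd\in\G$ and set $c_j(\gamma):=\int_{\gamma^{-1}(i\infty)}^{i\infty}f_j(w)\,dw$; this integral converges absolutely because each $f_j$ is cuspidal on $G$ and hence decays exponentially at every cusp of $\G$. Applying the change of variable $z=\gamma w$ to $F_i(\gamma\tau)$, using the identity $f_i|_2\gamma=\sum_j\rho_0(\gamma)_{ij}f_j$ coming from the $|_2$-action of $\G$ on $S_2(G)$, and splitting the resulting integral at $i\infty$, yields $F_i(\gamma\tau)=\sum_j\rho_0(\gamma)_{ij}\bigl(F_j(\tau)+c_j(\gamma)\bigr)$. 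Appending the constant entry $1$ as the last coordinate gives $F(\gamma\tau)=\rho(\gamma)F(\tau)$ with
$$\rho(\gamma)=\begin{pmatrix}\rho_0(\gamma)&\rho_0(\gamma)c(\gamma)\\ 0&1\end{pmatrix},$$
which is a block-triangular extension of the trivial representation by $\rho_0$. Any affine relation $\sum a_iF_i+a_{g+1}\equiv 0$ would differentiate to a linear dependence among the $f_j$, so the image of $F$ spans $\CC^{g+1}$; consequently the identity $\rho(\gamma_1\gamma_2)F(\tau)=\rho(\gamma_1)\rho(\gamma_2)F(\tau)$ forces $\rho$ to be a homomorphism. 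Holomorphicity of $F$ on $\bb{H}$ is immediate since each $F_j$ is a path-independent antiderivative of a holomorphic $1$-form on the simply connected domain $\bb{H}$.

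For indecomposability, suppose the extension splits. Then there is a $\G$-fixed vector $v=(a,1)^t\in\CC^{g+1}$, and restricting $\rho(\gamma)v=v$ to $\gamma\in G$---where $\rho_0|_G$ is trivial because each $f_j$ is $|_2 G$-invariant---forces $c(\gamma)=0$ for every $\gamma\in G$. Consequently $F(\gamma\tau)=F(\tau)$ for all $\gamma\in G$, so each $F_j$ descends to a holomorphic function on $G\backslash\bb{H}$. Near $i\infty$ the Fourier expansion of $F_j$ is the termwise antiderivative of the exponentially decaying $q$-expansion of $f_j$, so $F_j$ extends holomorphically across $i\infty$; applying the functional equation with elements $\delta\in\G$ carrying $i\infty$ to other cusps of $G$ propagates the holomorphic extension to every cusp of $X$. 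Hence $F_j$ is a holomorphic function on the compact connected Riemann surface $X=G\backslash\hstar$, therefore constant, contradicting $F_j'=f_j\not\equiv 0$.

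Finally, because $T$ stabilizes $i\infty$ in $\hstar$, the integral $c_j(T)$ has coincident endpoints and vanishes, so $\rho(T)$ is block-diagonal with blocks $\rho_0(T)$ and $1$; since $T^N\in G$ for some $N$, $\rho_0(T)^N=I$, so $\rho_0(T)$ is diagonalizable and hence so is $\rho(T)$. The main obstacle is the indecomposability step: one must carefully verify holomorphic extension of $F$ across every cusp of $X$, not merely those in the $G$-orbit of $i\infty$, using cuspidality of each $f_j$ at all cusps of $G$ together with the functional equation.
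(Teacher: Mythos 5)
Your proof is correct, but it constructs $\rho$ by a genuinely different route than the paper does. The paper gets the transformation law abstractly: the $f_j$ span the solution space of a $g$-th order modular differential equation $L[f]=0$ in weight two (via the modular Wronskian), so the components of $F$ span the solution space of the weight-zero equation $L\theta[f]=0$, and covariance of the modular derivative makes this solution space $\G$-stable; the block structure of $\rho$ is then read off from $\theta F=(F_0,0)^t$. You instead compute the Eichler cocycle $c(\gamma)$ explicitly by a change of variables, which is more elementary and has the added benefit of producing the actual corner entries $\rho_0(\gamma)c(\gamma)$ that the paper needs anyway for the period computations in Section \ref{section:periods}. For indecomposability both arguments are Liouville-type but packaged differently: the paper asserts that a non-constant holomorphic weight-zero $F$ forces $[\G:\ker\rho]=\infty$, while a decomposition $\rho\simeq\rho_0\oplus 1$ would give $\ker\rho=\ker\rho_0\supseteq G$ of finite index; you localize the same idea to the cocycle, showing that a splitting forces $c|_G=0$, hence each $u_j$ descends to a holomorphic function on the compact curve $X_G$ and must be constant. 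Your version spells out the cusp analysis that the paper leaves implicit (and your closing worry is already resolved by your own argument, since $\G$ acts transitively on $\bb{Q}\cup\{i\infty\}$, so every cusp of $X_G$ is $\delta(i\infty)$ for some $\delta\in\G$). Finally, your observation that $c(T)=0$ because $T$ fixes $i\infty$, combined with the finite order of $\rho_0(T)$, is a clean way to obtain the diagonalizability of $\rho(T)$ that the paper only sketches via $q$-expansions.
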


As $\rho_0$ factors through the finite group $\Gamma/G$, when $g=1,2$ we can use our classification together with the classification given in \cite[Sec 4]{MarksMason} to determine $\rho$ explicitly. We demonstrate this in Section \ref{section:periods} by computing explicitly the periods for three modular curves of low genus. We also make the following observation regarding the algebraicity of periods of modular curves (Theorem \ref{thm:algebraicityOfPeriods} below):
\begin{thm}
Let $G\subseteq \G$ be a subgroup containing a finite index normal subgroup $G'\triangleleft \G$ of genus one or two. Then the period matrix of $X_G$ has entries in $\overline{\mathbb{Q}}$. 
\end{thm}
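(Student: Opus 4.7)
The plan is to combine Theorem \ref{thm:MainPeriodThm} with the classifications of Theorem \ref{thm:unitmain} and \cite[Sec 4]{MarksMason}. The key observation is that every period of $X_{G'}$ appears as an entry of the matrix $\rho(\sigma)$ for some $\sigma \in G'$, and that for $\dim \rho \in \{2,3\}$ the classification realizes $\rho$ with entries in a cyclotomic field.

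First I would reduce to the case $G = G'$. The finite covering $\pi \colon X_{G'} \to X_G$ has surjective pushforward on $H_1(-, \mathbb{Z})$ and injective pullback $\pi^\ast \colon H^0(X_G, \Omega^1) \hookrightarrow H^0(X_{G'}, \Omega^1)$ onto the $G/G'$-invariant differentials. Working through these maps in compatible bases, each entry of the period matrix of $X_G$ becomes an $\overline{\mathbb{Q}}$-linear combination of entries of the period matrix of $X_{G'}$, so algebraicity for $X_{G'}$ implies the result for $X_G$. Hence it suffices to treat $G = G'$, a finite-index normal subgroup of $\Gamma$ of genus $g \in \{1,2\}$.

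Theorem \ref{thm:MainPeriodThm} then furnishes $F$ as a vector-valued modular function for an extension $0 \to \rho_0 \to \rho \to 1 \to 0$ of dimension $g+1$. Writing $\rho(\gamma) = \left(\begin{smallmatrix}\rho_0(\gamma) & v(\gamma) \\ 0 & 1\end{smallmatrix}\right)$, the automorphy relation $F(\sigma \tau_0) = \rho(\sigma) F(\tau_0)$ evaluated at $\sigma \in G'$ (where $\rho_0(\sigma) = I$) collapses to $\int_{\tau_0}^{\sigma \tau_0} f_i(z)\,dz = v(\sigma)_i$. By uniformization the classes $\{[\sigma]\}_{\sigma \in G'}$ surject onto $H_1(X_{G'}, \mathbb{Z})$, so every period of $X_{G'}$ is an entry of some $\rho(\sigma)$.

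Finally, since $\rho_0$ factors through the finite quotient $\Gamma/G'$ and $\dim \rho \leq 3$, the classification produces an explicit finite list of possible $\rho$; inspection of the tables shows that each can be realized with $\rho(T), \rho(S) \in \GL_{g+1}(\overline{\mathbb{Q}})$ (indeed, with entries in a cyclotomic field). Every $\rho(\sigma)$ is then a product of such matrices and so also lies in $\GL_{g+1}(\overline{\mathbb{Q}})$, making the periods algebraic. The principal obstacle is the bookkeeping in this table inspection: one must confirm that the extension cocycle $v$ can be chosen with algebraic values, which follows because the space of extension classes $H^1(\Gamma, \rho_0)$ is a finite-dimensional vector space over the (cyclotomic) base field of $\rho_0$, so up to equivalence only finitely many algebraic representatives need to be checked.
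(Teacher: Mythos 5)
Your overall strategy is the paper's: feed Theorem \ref{thm:MainPeriodThm} into the low-dimensional classifications and read the periods off as entries of $\rho(\sigma)$ for $\sigma$ hyperbolic. The reduction to $G=G'$ via the covering $X_{G'}\to X_G$ is a harmless variant of the paper's more direct use of $S_2(G)\subseteq S_2(G')$ (though note that $\pi_\ast$ on $H_1$ is in general only of finite index, not surjective; finite index suffices for algebraicity). The genuine problem is in your last step. Knowing that $\rho$ \emph{can be realized} with algebraic $\rho(T),\rho(S)$ does not make the entries of $\rho(\sigma)$ algebraic \emph{in the basis furnished by Theorem \ref{thm:MainPeriodThm}}, and your fallback --- that $H^1(\Gamma,\rho_0)$ is finite-dimensional over a cyclotomic field --- does not close the gap: a complex point of a $\overline{\mathbb{Q}}$-vector space need not be algebraic, so the specific cocycle $v(\gamma)=\int_{i\infty}^{\gamma(i\infty)}(f_1,\dots,f_g)^t$ attached to your chosen basis of cusp forms need not take algebraic values. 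Indeed it typically does not: for $G'=\Gamma'$ the individual periods of $\eta^4$ are transcendental (Chowla--Selberg type periods), and only their \emph{ratio} is algebraic. So the assertion ``making the periods algebraic'' is false as stated, even though the theorem about the period matrix is true.

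What saves the argument --- and what the paper invokes --- is that the classification determines the extension up to conjugation by a block-triangular matrix amounting to a change of basis of $S_2(G')$ (in particular, up to an overall rescaling of the cocycle, since the relevant space of normalized cocycles is one-dimensional), while the period matrix $P(X)=B(\Omega)A(\Omega)^{-1}$ is independent of the choice of basis $\Omega$ of differentials. One therefore computes $P(X_G)$ in the algebraic normal form of $\rho$, where every $\rho(\sigma)$ has algebraic entries, and the transcendental scalar relating that normal form to your period basis cancels in $B A^{-1}$. Your proof becomes correct once you insert this basis-independence (together with the observation that changing the homology basis acts by $\mathrm{Sp}_{2g}(\ZZ)$ and so does not change the field of definition); without it, the key step is unjustified.
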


It is easy to show (see Remark \ref{rmk:noCMhighergenus} below) that the above theorem requires $G'$ to have genus one or two, and that it cannot be extended to the case of $G'$ having arbitrary genus. Our method of computing periods does however extend to groups of higher genus $g$, provided a classification of indecomposable $\G$-representations of dimension $g+1$ is given. We leave the details of these computations to further exploration. 

\subsection*{Acknowledgments}
We would like to acknowledge Bill Hoffman, Ling Long and Geoff Mason for helpful discussions. The first author would also like to thank the Mathematics Department at Chico State for the hospitality during the brief visit in which this article was initiated. We would also like to thank the referee for numerous comments and a correction to an earlier version of the manuscript. 

\subsection*{Notation}
In this article we generate the modular group $\SL_2(\bb{Z})$ by 
$$
S=\mat{0}{-1}{1}{0},\quad T=\mat{1}{1}{0}{1}
$$
subject to the relations $S^2=(ST)^3$ and $S^4=1$. We will continue to use the notation $S$ and $T$ to refer to the images of these matrices in the quotient group $\G=\PSL_2(\bb{Z})=\SL_2(\bb{Z})/\{\pm1\}$. Thus 
a function $\rho:\G\rightarrow\gln{n}$ defines a matrix representation of $\G$ iff $\rho(S)^2=\rho(ST)^3=1$. We recall here that the commutator quotient $\G/\G'$ of $\G$ is cyclic of order 6, so that any \emph{character} (i.e.\ 1-dimensional representation) of $\G$ must have order dividing 6. In particular, since $\det\rho:\G\rightarrow\cstar$ is a character of $\G$, it must be true that $\det\rho(T)=\e{\frac{x}{6}}$ for some integer $x$; here and throughout, for $r\in\bb{R}$ we set $\e{r}=e^{2\pi ir}$.

\section{Three-dimensional indecomposable representations}
\label{section:3DimIndecomposable}
In this section we classify representations $\rho:\G\rightarrow\gln{3}$ that are reducible but indecomposable. For simplicity, we assume that $\rho(T)$ is diagonalizable, and that a basis has been chosen so that  
\begin{equation}\label{eq:T3}
\rho(T)=\diag{\lambda_1,\lambda_2,\lambda_3}.
\end{equation}
Since $\rho$ is reducible, by definition it has a nontrivial invariant subspace $W\leq\bb{C}^3$, which we assume for now is two-dimensional. Then conjugation with the appropriate element of $\gln{3}$ allows us to assume that the first two standard basis vectors for $\bb{C}^3$ are a basis for $W$. This implies that 
\begin{equation}\label{eq:rhoS}
\rho(S)=\threemat{a}{b}{x}{c}{d}{y}{0}{0}{\sigma}
\end{equation}
for some complex numbers $a,b,c,d,x,y,\sigma$.  We also note that conjugation by the invertible matrix $U=\diag{u,v,w}$ leaves \eqr{T3} invariant but takes \eqr{rhoS} to 
\begin{equation}\label{eq:conj}
U\rho(S)U^{-1}=\threemat{a}{buv^{-1}}{xuw^{-1}}{cvu^{-1}}{d}{yvw^{-1}}{0}{0}{\sigma}.
\end{equation}
In particular, this shows that the property of either of $x,y$ vanishing is not affected by conjugation with a diagonal matrix as above.

Continuing with our assumption that $\rho(S)$ is as in \eqr{rhoS}, we note that the upper left $2\times2$ block of $\rho$ defines a subrepresentation 
$$\rho_0:\G\rightarrow\gln{2}$$
with 
\begin{equation}\label{eq:rho0}
\rho_0(T)=\diag{\lambda_1,\lambda_2},\ \ \rho_0(S)=\mat{a}{b}{c}{d},
\end{equation}
and the lower right entry of $\rho$ defines a character $\chi:\G\rightarrow\cstar$ with
$$\chi(T)=\lambda_3,\ \ \chi(S)=\sigma.$$
This already shows that $\lambda_3$ must be a sixth root of unity, and $\sigma = \pm 1$. We now proceed by cases, based on the nature of the subrepresentation $\rho_0$.
\subsection{$\rho_0$ is irreducible}
We first consider the case where the subrepresentation \eqr{rho0} is irreducible. This allows us to assume that $\rho(T)$ is as in \eqr{T3} and $\rho(S)$ has the form \eqr{rhoS} with $abcd\neq0$. We note that for any such $\rho_0$ we must have that $\lambda_1\neq\lambda_2$, otherwise we could conjugate $\rho_0(S)$ to a diagonal matrix without altering $\rho_0(T)$, and $\rho_0$ would be reducible (this also follows from the classification in \cite{Mason}). Using the fact that $\rho_0(S^2)=\mat{1}{0}{0}{1}$ and setting
\begin{equation}
\label{eq:ssq}
\rho(S)^2=\threemat{1}{0}{by+x(a+\sigma)}{0}{1}{cx+y(d+\sigma)}{0}{0}{1}
\end{equation}
equal to the identity matrix shows that $x=0$ if and only if $y=0$, so that $xy\neq0$ (else $\rho$ is completely reducible). Employing \eqr{conj} allows us to assume that $x=y=1$, and with this assumption we see that $b=-a-\sigma$, $c=a-\sigma$ and we have
\begin{equation}
\label{equation:formulaForS}
\rho(S)=\begin{pmatrix}a&-a-\sigma&1\\a-\sigma&-a&1\\0&0&\sigma\end{pmatrix}.
\end{equation}
Note that $a\neq\pm\sigma$ since $\rho$ is not triangularizable. Using this information, we compute and find that $\rho(ST)^3=(a_{ij})$ with
$$a_{11}=a \lambda_1(a^2 \lambda_1^2+2 \lambda_1 \lambda_2-2 a^2 \lambda_1 \lambda_2 -\lambda_2^2+a^2 \lambda_2^2)$$
$$a_{12}=-(a+\sigma)\lambda_2[a^2 \lambda_1^2+\lambda_1 \lambda_2 -2 a^2 \lambda_1 \lambda_2 + a^2 \lambda_2^2].$$

Setting $a_{12}=0$ gives $a^2=-\frac{\lambda_1\lambda_2}{(\lambda_1-\lambda_2)^2}$, and using this in the identity $a_{11}=1$ yields $a=\frac{1}{\lambda_1\lambda_2(\lambda_1-\lambda_2)}$.  Substituting our values for $a$, $a^2$, and $\sigma$, we now obtain
$$a_{13}=\frac{(\lambda_1\lambda_2+\lambda_3^2)[(\lambda_1\lambda_2)^2-\lambda_1\lambda_2^2\lambda_3+\lambda_3^4]}{\lambda_1^2\lambda_2^2\lambda_3^5},$$
$$a_{23}=\frac{(\lambda_1\lambda_2+\lambda_3^2)[(\lambda_1\lambda_2)^2-\lambda_1^2\lambda_2\lambda_3+\lambda_3^4]}{\lambda_1^2\lambda_2^2\lambda_3^5}.$$
Note that if $\lambda_1\lambda_2\neq-\lambda_3^2$ then setting $a_{13}=a_{23}=0$ forces $\lambda_1=\lambda_2$, which is false. Therefore $\lambda_3^2=-\lambda_1\lambda_2=-\det\rho_0(T)$. Either of the two possible choices of $\lambda_3$ gives a representation, and there is no restriction on the irreducible $\rho_0$ we started with. 

It remains only to determine whether a representation obtained in this way is indeed indecomposable. Write 
$$
\rho \sim \twobytwo{\rho_0}{\kappa}{0}{\chi},
$$
with $\kappa$ viewed as a function $\G \rightarrow \CC^2$. By writing down how a change-of-basis matrix $M$ can decompose $\rho$ we arrive at the following criterion:

\begin{prop}
\label{prop:decomposabilityCriterion}
The three-dimensional representation $\rho$ is decomposable as $\rho_0'\oplus\chi'$, for some two-dimensional representation $\rho_0'$ and character $\chi'$, if and only if there is a matrix 
$$M=\begin{pmatrix}A&v\\w^t&c\end{pmatrix}\in\gln{3}$$ 
with $A\in M_2(\CC)$, $v,w\in\bb{C}^2$, and $c\in\bb{C}$, such that 
\begin{align*}
A\rho_0&=\rho_0'A\\
A\kappa&=(\rho_0'-\chi)v\\
w^t\rho_0&=\chi'w^t\\
w^t\kappa&=c(\chi-\chi')
\end{align*}
\end{prop}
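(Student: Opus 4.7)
The plan is to reduce the proposition to a routine block-matrix computation. By definition $\rho$ decomposes as $\rho_0' \oplus \chi'$ if and only if there is an invertible $M\in\gln{3}$ simultaneously conjugating $\rho$ to the block-diagonal representation $\rho_0' \oplus \chi'$, i.e.\ satisfying
$$
M\rho(\gamma) = (\rho_0' \oplus \chi')(\gamma)\, M \quad \text{for every } \gamma\in\G.
$$
First I would substitute the $2+1$ block decomposition $M = \begin{pmatrix} A & v \\ w^t & c \end{pmatrix}$ together with the upper-triangular block form of $\rho$ into this functional identity, and expand each side as a $2\times 2$ block matrix.

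The product $M\rho$ has block entries $A\rho_0$, $A\kappa + v\chi$, $w^t\rho_0$, and $w^t\kappa + c\chi$, while $(\rho_0' \oplus \chi')M$ has block entries $\rho_0'A$, $\rho_0'v$, $\chi'w^t$, and $\chi'c$. Equating corresponding blocks, and rearranging the two off-diagonal identities so as to isolate the $\kappa$-terms, produces exactly the four equations in the proposition, read pointwise as equalities of functions on $\G$. For instance, the top-right block yields $A\kappa = (\rho_0' - \chi)v$ interpreted as the equality of $\CC^2$-valued 1-cochains $\gamma \mapsto \rho_0'(\gamma)v - \chi(\gamma)v$, and the bottom-right block gives the corresponding scalar identity relating $w^t\kappa$ and $c(\chi-\chi')$.

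For the converse, I would simply observe that the block multiplication runs backward: if an invertible $M\in\gln{3}$ satisfies the four identities, then by the same blockwise computation $M\rho(\gamma) = (\rho_0' \oplus \chi')(\gamma)M$ holds on all of $\G$, so conjugation by $M$ exhibits the desired direct sum decomposition of $\rho$. No substantial obstacle arises; the proposition is essentially the componentwise unpacking of a single conjugation identity. The only bookkeeping needed is to keep track of the fact that each of the four displayed equations is an equality of functions $\G\rightarrow(\text{matrices, vectors, or scalars})$, rather than a single numerical identity.
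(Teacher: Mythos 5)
Your proposal is correct and is essentially the paper's own argument: the paper offers no more than ``writing down how a change-of-basis matrix $M$ can decompose $\rho$,'' which is exactly the blockwise expansion of $M\rho(\gamma)=(\rho_0'\oplus\chi')(\gamma)M$ that you carry out. The only caveat is the sign in the last relation (the bottom-right block literally gives $w^t\kappa=c(\chi'-\chi)$ rather than $c(\chi-\chi')$), a harmless discrepancy that does not affect the existence statement.
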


Specializing now to the current case where $\rho_0$ is irreducible, note that if $\rho$ were decomposable we must have $\rho_0 = \rho_0'$, since 
$$
\rho_0\cap\rho_0' \neq \{0\} 
$$
or otherwise we would have a 4-dimensional subspace $\rho_0\oplus\rho_0'$ of $\rho$. This also implies that $\chi = \chi'$ since $\det \rho = \det \rho_0\cdot \chi = \det \rho_0\cdot \chi'$. The third relation of Prop. \ref{prop:decomposabilityCriterion} shows that $w$ defines an invariant of the irreducible representation $\rho_0^*\chi^{-1}$, where $*$ denotes the dual representation. Therefore $w=0$, which implies that $A$ is invertible.

Since we are assuming that $\rho(T)$ is diagonalizable, $\kappa(T)=(0,0)^T$ implies that $\rho_0(T)v=\chi(T) v$ so either $v$ is an eigenvector of $\rho_0$ with eigenvalue $\lambda_3 = \chi(T)$, or $v=0$. We need to consider both cases:

\begin{itemize}
\item[1.] {\em $\rho(T)$ has distinct eigenvalues.} In this case $v=0$ necessarily. The second relation of Prop. \ref{prop:decomposabilityCriterion} then gives
$$
A\,\kappa(S) = A\begin{pmatrix}1\\1\end{pmatrix} =0,
$$
which is impossible since $A$ is invertible. Therefore in this case $\rho$ is indecomposable.  

\item[2.] {\em $\rho(T)$ has repeated eigenvalues.} If $\rho(T)$ has repeated eigenvalues then we either have $\lambda_1\neq\lambda_2=\lambda_3$ or $\lambda_3=\lambda_1\neq\lambda_2$. In the former case, one may check that 
$$M=\begin{pmatrix}-a-s&0&0\\0&-a-s&1\\0&0&1\end{pmatrix}$$
decomposes $\rho$ into a direct sum while while in the latter case one may use 
$$M=\begin{pmatrix}a-s&0&1\\0&a-s&0\\0&0&1\end{pmatrix}$$
to decomposes $\rho$. 

\end{itemize}

Note that only finitely many representations fall into case 2 above. Since $\lambda_3^2 = -\lambda_1\lambda_2$ always, in this case we must have 
$
\rho(T) = \diag{\lambda,-\lambda,\lambda}
$ or $
\rho(T) = \diag{-\lambda,\lambda,\lambda}
$ with $\lambda$ a sixth-root of unity.

We summarize the results of this section as follows. 

\begin{thm} 
\label{thm:Irreducible2DimSubFactor}Suppose the representation $\rho_0$ in \eqr{rho0} is irreducible, and let $\lambda_1,\lambda_2$ be the (distinct) eigenvalues of $\rho_0(T)$.
\begin{itemize}

\item[(i)] If  $\lambda_2/\lambda_1 \neq -1$, then there are two inequivalent indecomposable three-dimensional representatons $\rho\supseteq \rho_0$ such that $\rho(T)$ is diagonalizable. These are uniquely determined by the choice of $\lambda_3=\sqrt{-\lambda_1\lambda_2}$ and they are defined by $\rho(T)$, $\rho(S)$ as in \eqr{T3}, \eqr{rhoS} respectively.

\item[(ii)]If  $\lambda_2/\lambda_1 = -1$, there is no three-dimensional indecomposable $\rho\supseteq \rho_0$ with $\rho(T)$ diagonalizable.\qed
\end{itemize}
\end{thm}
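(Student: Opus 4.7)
The plan is to assemble the conclusion from the preceding analysis, which has already done the heavy lifting. The normalization established before the theorem shows that, once $\rho(T)$ is put in the form \eqref{eq:T3} and $\rho(S)$ in the form \eqref{equation:formulaForS}, the parameter $a$ together with the scalars $\sigma = \pm 1$ and $\lambda_3$ determines the extension. The braid relation $\rho(ST)^3 = 1$ has then been shown to force both the explicit value $a = 1/(\lambda_1\lambda_2(\lambda_1-\lambda_2))$ and, via the vanishing of $a_{13}$ and $a_{23}$, the compatibility condition $\lambda_3^2 = -\lambda_1\lambda_2$. So for each irreducible $\rho_0$ there are at most two candidate extensions, parametrized by the two square roots $\lambda_3 = \pm\sqrt{-\lambda_1\lambda_2}$.

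For part (i), I would argue as follows. When $\lambda_2/\lambda_1 \neq -1$, the two choices of $\lambda_3$ are distinct, neither equals $\lambda_1$ or $\lambda_2$, and $\rho(T)$ therefore has three distinct eigenvalues $\{\lambda_1,\lambda_2,\pm\sqrt{-\lambda_1\lambda_2}\}$; the two multisets $\{\lambda_1,\lambda_2,+\sqrt{-\lambda_1\lambda_2}\}$ and $\{\lambda_1,\lambda_2,-\sqrt{-\lambda_1\lambda_2}\}$ differ, so the two resulting representations are inequivalent. Indecomposability is then the content of the ``distinct eigenvalue'' case already handled after Proposition \ref{prop:decomposabilityCriterion}: one shows that in any decomposition the two-dimensional summand must equal $\rho_0$ (two $\Gamma$-invariant planes of $\bb{C}^3$ cannot meet in a line without splitting $\rho_0$), whence the complementary character equals $\chi$, forcing $w=0$ and $A$ invertible; the eigenvalue comparison then gives $v=0$, and the relation $A\kappa(S)=0$ contradicts the invertibility of $A$ because $\kappa(S) = (1,1)^t \neq 0$.

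For part (ii), when $\lambda_2 = -\lambda_1$, the constraint $\lambda_3^2 = -\lambda_1\lambda_2 = \lambda_1^2$ forces $\lambda_3 \in \{\lambda_1,\lambda_2\}$, so $\rho(T)$ always has a repeated eigenvalue and we are in case 2 of the preceding discussion. The two explicit change-of-basis matrices $M$ written there (one for $\lambda_2 = \lambda_3$, one for $\lambda_1 = \lambda_3$) are shown, by direct conjugation of \eqref{eq:T3} and \eqref{equation:formulaForS}, to put $\rho$ in block diagonal form $\rho_0 \oplus \chi$. This rules out the existence of any indecomposable extension and completes the theorem.

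The main obstacle, really the only non-mechanical step, is the indecomposability argument in (i): turning the abstract criterion of Proposition \ref{prop:decomposabilityCriterion} into the concrete conclusion $\rho_0' = \rho_0$, $\chi' = \chi$. The rest is bookkeeping on matrices already written down and the straightforward observation that distinct multisets of eigenvalues yield inequivalent representations.
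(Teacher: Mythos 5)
Your proposal is correct and follows the paper's own route essentially verbatim: normalize $\rho(S)$, extract $a$ and the constraint $\lambda_3^2=-\lambda_1\lambda_2$ from $\rho(ST)^3=1$, deduce indecomposability from Proposition \ref{prop:decomposabilityCriterion} when $\rho(T)$ has distinct eigenvalues (which, given $\lambda_3^2=-\lambda_1\lambda_2$, happens exactly when $\lambda_2/\lambda_1\neq -1$), and exhibit explicit decomposing matrices in the repeated-eigenvalue case. The one step you pass over as quickly as the paper does is verifying that both choices of $\lambda_3$ genuinely satisfy all of $\rho(ST)^3=1$ (not just the entries used to solve for $a$ and $\lambda_3$), but that is a finite check and matches the paper's level of detail.
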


This implies statement iii) of Theorem \ref{thm:unitmain}. 

\begin{rem}
If $\rho_0$ is one of the (finitely many) representations of Theorem \ref{thm:Irreducible2DimSubFactor}, part (ii), then we can still find exactly two inequivalent indecomposable three-dimensional representations $\rho\supseteq \rho_0$, but $\rho(T)$ will not be diagonalizable. 
\end{rem}

\subsection{$\rho_0$ is reducible but indecomposable}
We now consider the case where the sub-representation \eqr{rho0} is reducible but indecomposable. Thus we may assume that $c=0$ but $b\neq0$ in \eqr{rhoS}. Note that \eqr{ssq} implies that $x\neq0$, since otherwise we would have $x=y=0$ and $\rho$ would be completely reducible. It may, however, occur that $y=0$, so we assume this first. We find from \eqr{conj} that conjugation by a diagonal matrix allows us to assume that $b=x=1$, and \eqr{ssq} shows that $a=-\sigma=-d$ with $\sigma^2=1$, so we have
\begin{equation}\label{eq:S2}
\rho(S)=\threemat{-\sigma}{1}{1}{0}{\sigma}{0}{0}{0}{\sigma}.
\end{equation} 
We compute and find (again using using $\sigma^2=1$) that
$$\rho(ST)^3=\threemat{-\sigma\lambda_1^3}{\lambda_2(\lambda_1^2-\lambda_1\lambda_2+\lambda_2^2)}{\lambda_3(\lambda_1^2-\lambda_1\lambda_3+\lambda_3^2)}{0}{\sigma\lambda_2^3}{0}{0}{0}{\sigma\lambda_3^3}.$$
Setting this equal to the identity matrix shows that $\sigma=-\lambda_1^3=\lambda_2^3=\lambda_3^3$, and the ratios $\frac{\lambda_2}{\lambda_1}$ and $\frac{\lambda_3}{\lambda_1}$ are primitive cube roots of $-1$. In this case, one sees that conjugation by $U=\threemat{1}{0}{0}{0}{0}{1}{0}{1}{0}$ leaves \eqr{S2} invariant but takes \eqr{T3} to $U\rho(T)U^{-1}=\diag{\lambda_1,\lambda_3,\lambda_2}$, 
so the relabeling of $\lambda_2$ and $\lambda_3$ does not give a different representation. Writing $\lambda_j=\e{\frac{x_j}{6}}$ for integers $0\leq x_j\leq5$, we get 18 possible triples $(x_1,x_2,x_3)$.

We apply Prop. \ref{prop:decomposabilityCriterion} to each of these triples to detect whether the corresponding representations are decomposable. This computation again breaks down into two cases
\begin{itemize}
\item[1.]{\em $\rho(T)$ has distinct eigenvalues.} Let $\rho$ be a representation corresponding to a triple $(x_1,x_2,x_3)$, and suppose all $x_i$ are distinct (there are 6 such triples, listed in Table \ref{table2}). In this case $\rho_0$ is a two-dimensional indecomposable given by
$$
0 \rightarrow \chi^{x_1} \rightarrow \rho_0 \rightarrow \chi^{x_2} \rightarrow 0. 
$$ 
If 
$
\rho \simeq \rho_0'\oplus \chi'
$
then we necessarily must have that $\rho_0'$ is a 2-dim indecomposable with
$$
\chi^{x_1} \subseteq \rho_0',
$$
by dimension reasons. If $\rho_0'=\rho_0$, then $\chi = \chi'$ as well. Applying Prop. \ref{prop:decomposabilityCriterion}, we see that $w$ is an invariant of $\rho_0^*\cdot\chi^{-1}$. By inspection on all 6 triples, this is only possible if $w=0$, which implies that $A$ is invertible. Since $\kappa(T)= (0,0)^T$, we have that $v=0$ since all $\rho(T)$-eigenvalues are distinct. Therefore
 $$
A\,\kappa(S) = A\begin{pmatrix}1\\0\end{pmatrix} =0,
$$
which is impossible since $A$ is invertible. Therefore $\rho_0\neq \rho_0'$, but since $\rho_0'$ is reducible, indecomposable, containing $\chi^{x_1}$ there is only one possibility for $\rho_0'$ (\cite{MarksMason}). Arguing again as above, we can exclude this possibility as well for all 6 triples. Therefore for all these triples $\rho$ is indecomposable.

\item[2.]{\em $\rho(T)$ has repeated eigenvalues.} For the remaining triples $(x_1,x_2,x_3)$ where one of the $x_i$ is repeated, the corresponding representation $\rho$ can be explicitly decomposed by
$$
M= \threemat{1}{0}{0}{1}{1}{1}{0}{0}{1}.
$$
\end{itemize}

The 6 triples $(x_1,x_2,x_3)$ (Case 1 above) giving indecomposable representations with $y=0$ in are listed in Table \ref{table2} below.

\begin{center}
\captionof{table}{$\rho_0$ reducible, indecomposable and $y=0$}
\label{table2}
 \begin{tabular}{c c c} 
 $x_1$\ &$x_2$\ &$x_3$\\ 
 \hline
1&2&0\\5&4&0\\2&3&1\\0&5&1\\3&4&2\\4&5&3 
 \end{tabular}
\end{center}
This gives 6 of the 18 equivalence classes in statement ii) of Theorem \ref{thm:unitmain}.

The final case for upper triangular representations occurs when $c=0$ but $bxy\neq0$ in \eqr{rhoS}. In this case we can assume (using \eqr{conj}) that $x=y=1$, and setting \eqr{ssq} equal to the identity matrix shows that
$$\rho(S)=\threemat{\sigma}{-2\sigma}{1}{0}{-\sigma}{1}{0}{0}{\sigma}$$
with $\sigma^2=1$. Using this identity, we now compute and find that 
$$\rho(ST)^3=\threemat{\sigma\lambda_1^3}{-2\sigma\lambda_2(\lambda_1^2-\lambda_1\lambda_2+\lambda_2^2)}{\lambda_3(\lambda_1^2-2\lambda_1\lambda_2+2\lambda_2^2+\lambda_1\lambda_3-2\lambda_2\lambda_3+\lambda_3^2)}{0}{-\sigma\lambda_2^3}{\lambda_3(\lambda_2^2-\lambda_2\lambda_3+\lambda_3^2)}{0}{0}{\sigma\lambda_3^3}.$$
The diagonal entries show that $\sigma=\lambda_1^3=-\lambda_2^3=\lambda_3^3$, so $\frac{\lambda_2}{\lambda_1}$ and $\frac{\lambda_2}{\lambda_3}$ are each cube roots of $-1$, and the super diagonal entries show that these roots are primitive, since they each solve the equation $x^2-x+1=0$. Using the identities defined by these entries, the final nonzero entry yields the additional constraint $\lambda_1\lambda_3-\lambda_1\lambda_2-\lambda_2\lambda_3=0$, which in terms of primitive roots is the relationship $\frac{\lambda_2}{\lambda_1}+\frac{\lambda_2}{\lambda_3}=1$. This shows either $\frac{\lambda_2}{\lambda_1}=\e{\frac{1}{6}}$ and $\frac{\lambda_2}{\lambda_3}=\e{\frac{5}{6}}$, or vice versa. Writing $\lambda_j=\e{\frac{x_j}{6}}$ with $0\leq x_j\leq5$, we obtain 12 representations corresponding to triples $(x_1,x_2,x_3)$. These are listed in Table \ref{table3}.

\begin{center}
\captionof{table}{$\rho_0$ reducible, indecomposable and $y\neq0$}
\label{table3}
 \begin{tabular}{c c c} 
 $x_1$\ &$x_2$\ &$x_3$\\ 
 \hline
 5&0&1\\1&0&5\\0&1&2\\2&1&0\\1&2&3\\3&2&1\\2&3&4\\4&3&2\\3&4&5\\5&4&3\\4&5&0\\0&5&4
 \end{tabular}
\end{center}

All the triples in Table \ref{table3} give indecomposable representations, by applying the same argument as in Case 1 above. This completes the proof of statement ii) in Theorem \ref{thm:unitmain}.

\subsection{$\rho_0$ is completely reducible}
If $\rho_0$ is completely reducible, we may further assume that $b=0$ in \eqr{rhoS} as well as $c=0$. Note that in this case we must have $xy\neq0$, since otherwise  $\rho$ is the direct sum of a one- and a two-dimensional subrepresentation ($x=0,y\neq0$ or $x\neq0,y=0$) or three one-dimensional subrepresentations ($x=y=0$). Now \eqr{conj} shows that we may take $x=y=1$, and setting \eqr{ssq} equal to the identity matrix (and using $b=0$) shows that $a=d=-\sigma$. Thus
$$\rho(S)=\uptrithree{-\sigma}{0}{1}{-\sigma}{1}{\sigma}$$
and
$$\rho(ST)=\uptrithree{-\sigma\lambda_1}{0}{\lambda_3}{-\sigma\lambda_2}{\lambda_3}{\sigma\lambda_3}.$$
One then computes and finds (using $\sigma^2=1$) that 
$$\rho(ST)^3=\uptrithree{-\sigma\lambda_1^3}{0}{\lambda_3(\lambda_1^2+\lambda_3^2-\lambda_1\lambda_3)}{-\sigma\lambda_2^3}{\lambda_3(\lambda_2^2+\lambda_3^2-\lambda_2\lambda_3)}{\sigma\lambda_3^3}.$$
Setting this equal to the identity matrix, we see that
\begin{equation}\label{eq:6throot1} 
-\lambda_1^3=-\lambda_2^3=\lambda_3^3=\sigma,
\end{equation}
so each of the $\lambda_j$ are sixth roots of unity. The off-diagonal entries show that the ratios $\frac{\lambda_3}{\lambda_1}$ and $\frac{\lambda_3}{\lambda_2}$ are primitive cube roots of $-1$, since each is a solution of $x^2-x+1=0$. This is also implied by \eqr{6throot1}, so any sixth roots of unity $\lambda_j$ satisfying \eqr{6throot1} give a representation in this setting and, furthermore, the $\lambda_j$ define all the entries of $\rho(S)$ explicitly. Note that the choice of ordering of $\lambda_1$ and $\lambda_2$ is irrelevant. With this in mind, and writing $\lambda_j=\e{\frac{x_j}{6}}$ for integers $0\leq x_j\leq5$, we obtain eighteen possible triples $(x_1,x_2,x_3)$. 

When one of the $x_i$'s is repeated, the matrix 
$$
M= \threemat{0}{-1}{1}{0}{1}{0}{-1}{1}{0}
$$
decomposes $\rho$. When the $x_i$'s are distinct, we can again apply Prop. \ref{prop:decomposabilityCriterion} as before to conclude that $\rho$ is indecomposable. We are thus left with 6 triples $(x_1,x_2,x_3)$ such that $\rho$ is irreducible, listed in Table \ref{table1} below.
\begin{center}
\captionof{table}{$\rho_0$ completely reducible}
\label{table1}
 \begin{tabular}{c c c} 
 $x_1$\ &$x_2$\ &$x_3$\\ 
 \hline
1&5&0\\
2&0&1\\
3&1&2\\
4&2&3\\
5&3&4\\
0&4&5\\
 \end{tabular}
\end{center}
This establishes part i) of Theorem \ref{thm:unitmain}.

\subsection{$\rho_0$ is one-dimensional}
We now wish to consider the case where $\rho$ has a one-dimensional invariant subspace but no two-dimensional invariant subspace. Note that such a representation may be obtained by taking the dual of one of the representations classified above, and a $\rho$ of the above form has dual $\rho^\ast$ (where $\rho^\ast(\gamma)=\rho(\gamma^{-1})^t$ for each $\gamma\in\G$) with
$$\rho^\ast(T)=\diag{\lambda_1^{-1},\lambda_2^{-1},\lambda_3^{-1}},\ \ \rho^\ast(S)=\threemat{a}{c}{0}{b}{d}{0}{x}{y}{\sigma}.$$
Conjugation by $\threemat{0}{0}{1}{0}{1}{0}{1}{0}{0}$ allows us to assume that the first standard basis vector spans the resulting one-dimensional invariant subspace, and this gives  the representation
\begin{equation}\label{eq:dual}
\rho'(T)=\diag{\lambda_3^{-1},\lambda_2^{-1},\lambda_1^{-1}},\ \ \rho'(S)=\threemat{\sigma}{y}{x}{0}{d}{b}{0}{c}{a}.
\end{equation}
Thus the classification will be complete once we determine which of the above $\rho'$ are not conjugate to any $\rho$ as in \eqr{rhoS}, \eqr{T3}. We may assume via conjugation that $c=0$ if and only if \eqr{rho0} is reducible, and in this case it is evident that $\rho'$ is one of the representations already classified.  This establishes the first statement in Theorem \ref{thm:onedim}. On the other hand, if \eqr{rho0} is irreducible then $\rho$ cannot have a one-dimensional invariant subspace (or $\rho$ would be completely reducible), so the dual $\rho'$ is not one of the representations we have already classified. This implies the second statement of Theorem \ref{thm:onedim}.

\section{Generating Weights and the M-functor}
\label{section:GenWeights}

Let $\rho: \SL_2(\ZZ)\rightarrow \GL(V)$ be an indecomposable, finite-dimensional complex representation.  For any $k\in \ZZ$, let $M_k(\rho)$ be the (finite-dimensional) complex vector space of {\em holomorphic} $\rho$-valued modular forms (\cite{CandeloriFranc},\cite{MarksMason}) of weight $k$, and let 
$$
M(\rho) := \bigoplus_{k\in\ZZ} M_k(\rho)
$$ 
be the $\ZZ$-graded module of $\rho$-valued modular forms over the ring $R = M(1) = \CC[E_4,E_6]$ of modular forms of level one. By \cite[Thm 1]{MarksMason} this module is free of rank $d = \dim \rho$, thus any choice of homogeneous generators gives an isomorphism
$$
M(\rho) \simeq \bigoplus_{i=1}^{d = \dim \rho} R[-k_i],
$$
where by $R[a]$ we denote the rank one graded module over $R$ obtained by shifting the grading by $a$. The $d$-tuple of integers $(k_1, \ldots, k_d)$ does not depend on the choice of generators, and the $k_i$'s are called the {\em generating weights} of $\rho$. The goal of this section is to elucidate the relationship between the generating weights of indecomposable, reducible representations $0\rightarrow \rho_0 \rightarrow \rho \rightarrow \rho_1 \rightarrow 0$ and the generating weights of $\rho_0,\rho_1$. For simplicity we almost always assume that the generating weights of $\rho_0$ and $\rho_1$ all lie in $\{0,\ldots, 11\}$ (this is always the case for unitarizable, or even {\em positive} representations \cite{CandeloriFranc}, \S 6), but the same methods apply in full generality.

Let $\mathrm{Rep}(\SL_2(\ZZ))$ be the category of finite-dimensional complex representations of $\SL_2(\ZZ)$. In \cite{MarksMason}, it is shown that the functor
\begin{align*}
M:\mathrm{Rep}(\SL_2(\ZZ))&\longrightarrow \mathrm{grMod}_R \\
\rho &\longmapsto M(\rho)
\end{align*}
to the category of finitely generated graded $R$-modules is faithful and left exact, but not right exact. In particular, given an exact sequence of representations
\begin{equation}
\label{eqn:generalRepsExactSequence}
0\rightarrow \rho_0 \rightarrow \rho \rightarrow \rho_1 \rightarrow 0,
\end{equation}
applying the $M$-functor we obtain a long exact sequence of graded vector spaces 
$$
0 \rightarrow M(\rho_0) \rightarrow M(\rho) \rightarrow M(\rho_1) \stackrel{\delta}\rightarrow K,
$$
where $K$ is, in general, non-zero. To obtain control over this `error term', recall the construction of \cite{CandeloriFranc}, which assigns to each $\rho \in \mathrm{Rep}(\SL_2(\ZZ))$ a vector bundle $\cV(\rho)$ over the compact complex orbifold $\cMbar:=\SL_2(\ZZ)\backslash\mathfrak{h}\cup\{\infty\}$, which is just the {\em canonical extension} to $\cMbar$ of the local system over $\SL_2(\ZZ)\backslash\mathfrak{h}$ determined by $\rho$. The relationship between $M(\rho)$ and $\cV(\rho)$ can be given as follows. Let $\cL_k$ be the line bundle of holomorphic modular forms on $\cMbar$, characterized by 
$$
H^0(\cMbar, \cL_k) = M_k(1) = \text{holomorphic modular forms of weight $k$}.
$$
Then $\cV(\rho)$ is a vector bundle whose defining property is that the global sections of $\cV_k(\rho):=\cV(\rho)\otimes\cL_k$ are precisely the holomorphic $\rho$-valued modular forms of weight $k$, i.e.
$$
H^0(\cMbar, \cV_k(\rho)) = M_k(\rho) = \text{holomorphic $\rho$-valued modular forms of weight $k$.}
$$
Using the basic properties of sheaf cohomology, we may thus express $K$ as
$$
K = \bigoplus_{k \in \ZZ} H^1(\cMbar, \cV_k(\rho_0)).
$$
By standard vanishing theorems in algebraic geometry, $K$ is a finite-dimensional (graded) vector space. The linear map $\delta= \oplus_k \delta_k$ is also graded, and by the above it is non-zero only for finitely many $k\in \ZZ$. 
\begin{defn}
The representation $\rho$ is said to be {\em $M$-split} if $\delta = 0$. 
\end{defn}

Note that if $\rho$ is $M$-split then there is an $R$-module isomorphism 
$$
M(\rho) \simeq M(\rho_0) \oplus M(\rho_1),
$$
and therefore the generating weights of $\rho$ are just the union of the generating weights of $\rho_0$ and $\rho_1$. We thus determine a criterion for when $\rho$ is $M$-split:

\begin{thm}
\label{theorem:keyMSplitProp}
Suppose the generating weights of $\rho_0,\rho_1$ lie in $[0,11]\cap\ZZ$. Then $\rho$ is $M$-split unless at least one of the following occurs:
\begin{itemize}
\item[(i)] $k=0$ is a generating weight of $\rho_1$ and  $k=10$ is a generating weight of $\rho_0$
\item[(ii)]$k=1$ is a generating weight of $\rho_1$ and  $k=11$ is a generating weight of $\rho_0$
\end{itemize}
\end{thm}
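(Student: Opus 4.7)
The plan is to analyze the connecting homomorphism $\delta_k$ of the long exact sequence of sheaf cohomology via Serre duality on the stacky modular curve $\cMbar$, and then use the free module theorem to control the resulting $H^1$.

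Applying $H^\ast(\cMbar, -\otimes \cL_k)$ to the defining short exact sequence of canonical extension bundles produces, in each weight $k$, the long exact sequence
\[
0 \to M_k(\rho_0) \to M_k(\rho) \to M_k(\rho_1) \xrightarrow{\delta_k} H^1(\cMbar, \cV_k(\rho_0)) \to \cdots,
\]
so $\rho$ is $M$-split iff every $\delta_k$ vanishes, and a non-zero $\delta_k$ requires both $M_k(\rho_1) \neq 0$ and $H^1(\cMbar, \cV_k(\rho_0)) \neq 0$. To compute the $H^1$ I would invoke Serre duality on $\cMbar$: the Kodaira--Spencer isomorphism combined with the identification $\mathcal{O}(\text{cusp}) \cong \cL_{12}$ gives $\omega_{\cMbar} \cong \cL_{-10}$, so
\[
H^1(\cMbar, \cV_k(\rho_0))^\vee \cong \Hom_{\cMbar}(\cV(\rho_0), \cL_{-k-10}).
\]

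Next, the free module theorem splits $M(\rho_0) \cong \bigoplus_i R[-k_i^{(0)}]$, and the corresponding sheaf-level decomposition of $\cV(\rho_0)$ (obtained from the Birkhoff--Grothendieck decomposition on the coarse space $\bb{P}^1$, suitably adapted to the orbifold structure of $\cMbar$) reduces the Hom computation to
\[
\Hom_{\cMbar}(\cV(\rho_0), \cL_{-k-10}) \cong \bigoplus_i M_{k_i^{(0)}-k-10}(1).
\]
Since $k, k_i^{(0)} \in [0, 11]$, the weight $k_i^{(0)}-k-10$ lies in $[-21, 1]$; in this range $M_{k_i^{(0)}-k-10}(1)$ is non-zero only when $k_i^{(0)}-k-10 = 0$, as modular forms of negative weight vanish and $M_1(1) = 0$. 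Hence $H^1(\cMbar, \cV_k(\rho_0)) \neq 0$ forces $k_i^{(0)} = k+10$, leaving only $(k, k_i^{(0)}) = (0, 10)$ or $(1, 11)$. Combined with $M_k(\rho_1) \neq 0$, which for $k \in \{0, 1\}$ forces $k$ itself to be a generating weight of $\rho_1$, we recover exactly cases (i) and (ii).

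The main obstacle will be justifying the sheaf-level decomposition of $\cV(\rho_0)$ used above. The free module theorem guarantees a splitting of global sections as a graded $R$-module, but lifting this to the canonical extension bundle on the orbifold $\cMbar$ requires extra care at the elliptic fixed points of orders $2$ and $3$, which can introduce character twists in the individual line bundle summands. One would need to verify either that such twists do not occur, or that any twisted summand contributes no non-vanishing term in the range $k_i^{(0)}-k-10 \in [-21, 1]$. The general theory of vector bundles on stacky curves developed in \cite{CandeloriFranc} should provide the tools needed, but the case-by-case verification remains.
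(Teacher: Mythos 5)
Your argument is correct and follows essentially the same route as the paper: the long exact sequence in sheaf cohomology, the splitting of $\cV(\rho_0)$ into line bundles $\cL_{-k_i}$ indexed by the generating weights, and the vanishing of $H^1(\cMbar,\cL_j)$ for all $j\ge -11$ with $j\ne -10$ together with $\dim H^1(\cMbar,\cL_{-10})=1$ --- facts the paper simply quotes and you rederive via Serre duality with $\omega_{\cMbar}\cong\cL_{-10}$. The obstacle you flag at the end is not a real gap: by the theory of \cite{CandeloriFranc} the Picard group of the stacky curve $\cMbar$ is infinite cyclic generated by $\cL_1$, so every line-bundle summand of $\cV(\rho_0)$ is already of the form $\cL_{-k_i}$ with $k_i$ a generating weight and no additional twists at the elliptic points can arise; the paper's proof uses exactly this decomposition without further comment.
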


\begin{proof}
Suppose $k_1, \ldots, k_n$ are the generating weights of $\rho_0$, Then $K = K_0\oplus K_1$ is only graded in degrees $k=0,1$, since 
$$
K = \bigoplus_{k\geq 0} H^1(\cMbar, \oplus_{i=1}^n \cL_{k-k_i}),
$$
and $H^1(\cMbar, \cL_k ) = 0$ for all $k=0,-1,\ldots,-9, -11$ and it is a one-dimensional vector space for $k=-10$. More precisely, if we denote by $m_0, \ldots, m_{11}$ the multiplicity of each weight of $\rho_0$, we have
$$
K_0 = H^1(\cMbar, \cL_{-10}^{\oplus m_{10}} ),\quad K_1 = H^1(\cMbar, \cL_{-10}^{\oplus m_{11}} ).
$$

 Therefore if $m_{10} = 0$ and $m_{11} = 0$ then $K=0$ and $\rho$ is $M$-split. Suppose then that $m_{10}\neq 0$. Applying the $M$-functor we get an exact sequence of $R$-modules
$$
0 \rightarrow M(\rho_0) \rightarrow M(\rho) \rightarrow M(\rho_1) \stackrel{\delta}\rightarrow K_0
$$
so that $\delta= \delta_0$ can only be non-zero in degree 0, where it is given by
$$
M_0(\rho_1) \stackrel{\delta_0}\rightarrow K_0.
$$
But if $k=0$ is not a generating weight of $\rho_1$, then $M_0(\rho_1) = 0$ and thus $\delta = 0$, i.e. $\rho$ is $M$-split. The case when $m_{11}\neq 0$ is obtained similarly by looking at degree 1.
\end{proof}

\begin{rem}
The hypotheses of Theorem \ref{theorem:keyMSplitProp} always hold whenever $\rho_0, \rho_1$  are unitarizable, or even {\em positive} (as in \cite{CandeloriFranc}, \S 6).
\end{rem}

\begin{rem}
Note that Theorem \ref{theorem:keyMSplitProp} can be viewed as a generalization to higher dimensions of \cite{MarksMason}, Theorem 4, which covers the case of $\rho$ being 2-dimensional. The result of \cite{MarksMason} is obtained by different means using the modular derivative $D$.  
\end{rem}

\subsection{2-dimensional indecomposable representations}
\label{section:dimension1and2}

We now employ Theorem \ref{theorem:keyMSplitProp} to compute the generating weights for reducible, indecomposable two-dimensional representations of $\SL_2(\ZZ)$. As is well-known, there are exactly 12 one-dimensional representations $\chi^a$, $a=0,\ldots, 11$ of $\SL_2(\ZZ)$, characterized by 
$$
\chi^a(T) = \e{\frac{a}{12}}. 
$$
As shown in \cite{CandeloriFranc}, in this case we have
$$
\cV(\chi^a) \simeq \cL_{-a}
$$
and therefore the unique generating weight of $\chi^a$ is just the integer $a \in [0,11]$. 

Next, suppose that we have a two-dimensional representation $\rho$ which is an extension 
\begin{equation}
\label{eqn:2DimExactSequence}
1\rightarrow \chi^a \rightarrow \rho \rightarrow \chi^b \rightarrow 1,
\end{equation}
and suppose that $\rho$ is indecomposable. By \cite[Lem 4.3]{MarksMason} there are precisely 24 isomorphism classes of such representations. These isomorphism classes are in bijection with ordered pairs $(a,b) \in \{0,\ldots, 11\}^2$ such that $a-b \equiv \pm 2 \mod 12$. A representative $\rho_{(a,b)}$ in each isomorphism class can be chosen to have
$$
\rho_{(a,b)}(T)  = \smalltwobytwo{\e{\frac{a}{12}}}{0}{0}{\e{\frac{b}{12}}},$$ 
so that the pair $(a,b)$ corresponds to the presentation of $\rho$ as in \eqref{eqn:2DimExactSequence}.
\begin{thm}
\label{theorem:2Dim}
\noindent

\begin{itemize}
\item[(1)] If $(a,b)\neq (10,0),(11,1)$, then the generating weights of $\rho_{(a,b)}$ are $(k_1,k_2) = (a,b)$. 
\item[(2)] If $(a,b) = (10,0), (11,1)$ the generating weights of $\rho_{(a,b)}$ are $(4,6)$, $(5,7)$, respectively. 
\end{itemize}
\end{thm}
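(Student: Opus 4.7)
The plan is to handle the generic and exceptional cases separately. For (1), Theorem \ref{theorem:keyMSplitProp} applies directly: $\rho_0=\chi^a$ has unique generating weight $a\in\{0,\ldots,11\}$ and $\rho_1=\chi^b$ has generating weight $b\in\{0,\ldots,11\}$, and the hypothesis $(a,b)\neq(10,0),(11,1)$ rules out both conditions of that theorem. Hence $\rho_{(a,b)}$ is $M$-split, so $M(\rho_{(a,b)})\cong M(\chi^a)\oplus M(\chi^b)\cong R[-a]\oplus R[-b]$ and the generating weights are $(a,b)$.

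For (2) the key step is showing that $\rho_{(a,b)}$ is genuinely not $M$-split in each exceptional case, since Theorem \ref{theorem:keyMSplitProp} only provides a sufficient condition. Applying the canonical extension functor $\cV$ of \cite{CandeloriFranc} to the short exact sequence $0\to\chi^a\to\rho_{(a,b)}\to\chi^b\to 0$ produces a line-bundle extension
$$0\to\cL_{-a}\to\cV(\rho_{(a,b)})\to\cL_{-b}\to 0$$
on $\cMbar$, whose class lives in $\operatorname{Ext}^1_{\cMbar}(\cL_{-b},\cL_{-a})=H^1(\cMbar,\cL_{b-a})$. In both exceptional cases $b-a=-10$, so this space is one-dimensional, matching the one-dimensional $\operatorname{Ext}^1_{\SL_2(\ZZ)}(\chi^b,\chi^a)$; the natural map between these $\operatorname{Ext}^1$ groups induced by $\cV$ is an isomorphism by the equivalence of categories in \cite{CandeloriFranc}. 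Indecomposability of $\rho_{(a,b)}$ thus forces a non-trivial bundle extension. Under the standard identification of the connecting homomorphism with cup product by the extension class, this yields $\delta_b(\eta^{2b})\neq 0$ in $K_b$, so $\delta\neq 0$.

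With $\delta\neq 0$ and $R$-linear, observe that $K$ is concentrated in the single degree $b$: for any $n>0$, an element $r\in R_n$ sends $K_b$ into $K_{b+n}=H^1(\cMbar,\cL_{b+n-a})=0$ since $b+n-a>-10$. Thus $R_{>0}$ annihilates $K$, and the kernel of $\delta$ equals the submodule $R_{>0}\cdot\eta^{2b}\subset R[-b]$ of all elements of weight strictly greater than $b$. The short exact sequence
$$0\to R[-a]\to M(\rho_{(a,b)})\to R_{>0}\cdot\eta^{2b}\to 0$$
yields
$$\operatorname{HS}(M(\rho_{(a,b)}))=\frac{t^a+t^{b+4}+t^{b+6}-t^{b+10}}{(1-t^4)(1-t^6)}=\frac{t^{b+4}+t^{b+6}}{(1-t^4)(1-t^6)},$$
using $a=b+10$ in each exceptional case. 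Matching against the free presentation $M(\rho_{(a,b)})\cong R[-k_1]\oplus R[-k_2]$ guaranteed by the free module theorem reads off $(k_1,k_2)=(4,6)$ when $(a,b)=(10,0)$ and $(5,7)$ when $(a,b)=(11,1)$.

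The main obstacle is the non-vanishing of $\delta$ in the second paragraph: since both $\operatorname{Ext}^1$ groups are one-dimensional, the map between them is either zero or an isomorphism, and excluding the former is where indecomposability of $\rho_{(a,b)}$ must enter essentially. The cleanest route is via the equivalence of categories in \cite{CandeloriFranc}; an alternative would be to inspect $M_0(\rho_{(10,0)})$ and $M_1(\rho_{(11,1)})$ directly via $q$-expansions and rule out a holomorphic lift of the constant (respectively of $\eta^2$) using the non-triviality of the cocycle defining $\rho$, but this direct computation is noticeably more cumbersome than the cohomological argument.
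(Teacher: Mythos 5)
Part (1) of your argument coincides with the paper's. For part (2) your overall strategy is sound and your endgame (once $\delta\neq 0$ is known, $K$ is concentrated in degree $b$, $\ker\delta=R_{\geq 4}\cdot\eta^{2b}$, and the Hilbert series forces $(k_1,k_2)=(b+4,b+6)$) is correct and in fact a nice uniform alternative to the paper's treatment, which instead argues that the minimal weight must be $4$ and handles $(11,1)$ by multiplying by $\eta^2$. But the crucial step --- showing $\delta\neq 0$ --- has a genuine gap.

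You deduce $\delta\neq 0$ from the claim that $\cV$ induces an isomorphism $\operatorname{Ext}^1_{\SL_2(\ZZ)}(\chi^b,\chi^a)\to\operatorname{Ext}^1_{\cMbar}(\cL_{-b},\cL_{-a})$ ``by the equivalence of categories in \cite{CandeloriFranc}.'' No such equivalence exists at the level relevant here: $\cV$ lands in coherent sheaves on $\cMbar$, where it is faithful and exact but far from full, and the induced map on $\operatorname{Ext}^1$ of underlying sheaves is \emph{not} injective in general. Indeed, for any pair with $b-a\equiv 2 \pmod{12}$ and $b-a\neq -10$ one has $H^1(\cMbar,\cL_{b-a})=0$ while the representation-theoretic $\operatorname{Ext}^1$ is nonzero, so the bundle extension splits even though the representation is indecomposable. (An equivalence would hold for bundles with logarithmic connection, but the connecting map $\delta$ is computed from the coherent-sheaf extension class, and the forgetful map on $\operatorname{Ext}^1$ is exactly what can kill it.) So in the two exceptional cases the fact that both $\operatorname{Ext}^1$'s are one-dimensional does not by itself tell you the map between them is nonzero; that is precisely what must be proved, and your argument is circular at this point. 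The paper closes this gap with the ``direct'' argument you dismiss as cumbersome, and it is actually short: if $\delta=0$ then $M_0(\rho_{(10,0)})\neq 0$, and any $F\in M_0(\rho)$ satisfies $DF\in M_2(\rho)=0$ (since the putative generating weights would be $0$ and $10$), so $F$ is a constant invariant vector; together with an eigenvector spanning the subrepresentation $\chi^{10}$ this gives an eigenbasis splitting $\rho\sim \chi^{10}\oplus 1$, contradicting indecomposability. You should replace your Ext-isomorphism claim with this (or an equivalent) argument; with that substitution the rest of your proof goes through.
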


\begin{proof}
Part (1) follows from Theorem \ref{theorem:keyMSplitProp}, since in this case $\rho$ is $M$-split, and the generating weights of $\rho$ are just $(a,b)$. In part (2), we do not know a priori whether $\rho$ is $M$-spit. We want to show that in fact $\rho$ is not $M$-split. Suppose first $(a,b) = (10,0)$. By applying the global section functor we obtain an exact sequence of vector spaces
$$
0 \rightarrow H^0(\cMbar, \cV(\rho) )  \rightarrow H^0(\cMbar,\cV(1)) \stackrel{\delta}\rightarrow H^1(\cMbar,\cV(\chi^{10}))\rightarrow H^1(\cMbar, \cV(\rho) ) \rightarrow 0,
$$
where both terms $H^0(\cMbar,\cV(1)), H^1(\cMbar,\cV(\chi^{10}))$ are one-dimensional since $\cV(1) = \cL_0$ and $\cV(\chi^{10}) = \cL_{-10}$. If $\delta = 0$ (i.e. $\rho$ is $M$-split), then the generating weights of $\rho$ are $\{0,10\}$. In particular, there is a modular form $F\in M_0(\rho)$ of minimal weight 0. This modular form must satisfy $F' = 0$, since $F' \in M_2(\rho) = 0$. In other words, $F\in V$ is constant, and it gives a $\rho$-invariant vector. Now since $\rho$ contains the one-dimensional subrepresentation $\chi^{10}$ we may find another vector $v \in V$ such that
$$
\rho(\gamma)v = \chi^{10}(\gamma)v, \gamma\in \SL_2(\ZZ).
$$
Clearly $v$ and $F$ are linearly independent, since they are both eigenvectors of $\rho(T)$ with different eigenvalues, thus they form a basis for $\rho$. But this is impossible, since then $\rho \sim \chi^{10}\oplus 1$, which would contradict the fact that $\rho$ is indecomposable. Therefore $\delta$ is an isomorphism, so that the minimal weight is $k_1 = 4$ and the generating weights are $(4,6)$. 
For the case $(a,b) = (11,1)$, note that multiplication by $\eta^2$ gives an isomorphism of graded $R$-modules $M(\rho_{(10,0)}) \simeq M(\rho_{(11,1)})$ shifting the degree by 1. But we just proved that $M(\rho_{(10,0)})\simeq R[-4]\oplus R[-6]$ and therefore $M(\rho_{(11,1)})\simeq R[-5]\oplus R[-7]$, i.e. the generating weights of $\rho_{(11,1)}$ are $(5,7)$. 
\end{proof}

\begin{rem}
Note that the contents of Theorem \ref{theorem:2Dim} can be essentially extracted from the proof of \cite{MarksMason}, Theorem 4, where they are proved by different means using the modular derivative $D$.   
\end{rem}

\section{Generating weights in dimension three}

We now turn to computing the generating weights of indecomposable, reducible representation of dimension three. For simplicity, we restrict here to representations that factor through $\Gamma=\PSL_2(\ZZ)$ and such that $\rho_0,\rho_1$ have generating weights lying in $\{0,\ldots, 11\}$, but the same methods can be applied in full generality. First, we consider those representations $\rho$ that have a two-dimensional subrepresentation, i.e. they are of the form 
\begin{equation}
\label{eqn:3DimIndec}
0 \rightarrow \rho_0 \rightarrow \rho \rightarrow \chi^{2a} \rightarrow 0,
\end{equation}
with $\rho_0$ a 2-dimensional representation and $a = 0,\ldots, 5$. Then Theorem \ref{theorem:keyMSplitProp} in this case reduces to

\begin{cor}
\label{corollary:3DimIrreducibleCriterion}
Suppose $\rho_0$ has generating weights given by $(k_1,k_2)\in \{0,\ldots, 11\}^2$. Then $\rho$ is $M$-split unless $a=0$ and either $k_1$ or $k_2$ is equal to $10$. 
\end{cor}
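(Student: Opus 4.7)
The plan is to obtain the corollary as a direct specialization of Theorem \ref{theorem:keyMSplitProp}, using the hypothesis that $\rho$ factors through $\Gamma=\PSL_2(\ZZ)$ to rule out the second exceptional case.

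First I would observe that since $\rho$ factors through $\PSL_2(\ZZ)=\SL_2(\ZZ)/\{\pm I\}$, so does the subrepresentation $\rho_0$ (and likewise $\rho_1$). For any nonzero $F\in M_k(\rho_0)$, evaluating the slash action at $-I$ gives
$$(-1)^k F = F|_k(-I) = \rho_0(-I)F = F,$$
so $k$ must be even. Hence $M_k(\rho_0)=0$ for every odd $k$, and in particular the generating weights $k_1,k_2$ of $\rho_0$ lie in $\{0,2,4,6,8,10\}$. Similarly, the unique generating weight of $\rho_1=\chi^{2a}$ is the even integer $2a$.

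Now I would apply Theorem \ref{theorem:keyMSplitProp}. Condition (ii) of that theorem would require $11$ to be a generating weight of $\rho_0$, which is impossible by the parity observation above. Condition (i) requires $0$ to be a generating weight of $\rho_1$, i.e.\ $2a=0$, equivalently $a=0$; and $10$ to be a generating weight of $\rho_0$, i.e.\ $k_1=10$ or $k_2=10$. Combining these observations, $\rho$ is $M$-split except in the stated exceptional case.

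There is no real obstacle here: the parity argument is immediate from the definition of the slash action, and everything else is the content of Theorem \ref{theorem:keyMSplitProp}. The only thing to be slightly careful about is that the hypothesis on generating weights lying in $\{0,\ldots,11\}$ is not automatic from factoring through $\PSL_2(\ZZ)$, so this hypothesis must be invoked explicitly to situate ourselves within the range where Theorem \ref{theorem:keyMSplitProp} applies verbatim.
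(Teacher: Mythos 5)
Your proposal is correct and takes essentially the same route as the paper, which obtains the corollary by directly specializing Theorem \ref{theorem:keyMSplitProp} to the case $\rho_1=\chi^{2a}$: condition (i) becomes ``$2a=0$ and $10\in\{k_1,k_2\}$,'' while condition (ii) is vacuous because the relevant generating weights are even. Your parity argument via $\rho(-I)=I$ is exactly the observation the paper invokes elsewhere (``all $\Gamma$-representations have even weights''), and you are right that the hypothesis $k_1,k_2\in\{0,\ldots,11\}$ must be assumed separately in order to apply the theorem.
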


We use this criterion to find the generating weights of $\rho$. We divide the computations in three cases, according to whether $\rho_0$ is irreducible, reducible but indecomposable, or completely reducible.

\subsection{$\rho_0$ is irreducible}
\label{subsection:2dimIsIrreducible} According to Theorem \ref{thm:Irreducible2DimSubFactor}, for each isomorphism class of a two-dimensional, irreducible $\rho_0$ such that the ratio of the eigenvalues of $\rho_0(T)$ is different from -1, there are precisely two isomorphism classes of indecomposable representations $\rho\supseteq \rho_0$ with $\rho(T)$ diagonalizable, characterized by 
$$
\lambda_3 = \e{\frac{a}{6}} = \pm\sqrt{-\det\rho_0(T)}.
$$
For such representations, we have:
\begin{thm}
\label{theorem:rho0IrreducibleGenWeights}
Suppose $\rho_0$ is irreducible, with generating weights equal to $(k_1,k_2)\in \{0,\ldots, 11\}^2$. Then
\begin{itemize}
\item[(i)] if $(k_1, k_2) \neq (8,10)$, or $(k_1,k_2) = (8,10)$ and $a=3$, then $\rho$ is $M$-split.  
\item[(ii)] if $(k_1,k_2) = (8,10)$ and $a=0$, then $\rho$ is not $M$-split and the generating weights are $(4,6,8)$.  
\end{itemize}
\end{thm}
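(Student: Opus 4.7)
The plan is to combine Corollary \ref{corollary:3DimIrreducibleCriterion} with the classification of 2-dimensional irreducible representations of $\Gamma$ (as in \cite{Mason,MarksMason}) to handle part (i), and then adapt the constant-vector obstruction from the proof of Theorem \ref{theorem:2Dim} to handle part (ii). By Corollary \ref{corollary:3DimIrreducibleCriterion}, $\rho$ is automatically $M$-split whenever $a\neq 0$ or $10\notin\{k_1,k_2\}$; in particular the case $(k_1,k_2)=(8,10)$ with $a=3$ is immediate, and the remaining task for (i) is to rule out other pairs $(k_1,k_2)\neq(8,10)$ with $10\in\{k_1,k_2\}$ when $a=0$.

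For this classification step, Theorem \ref{thm:Irreducible2DimSubFactor} gives $\lambda_3^2=-\det\rho_0(T)$, so $a=0$ forces $\det\rho_0(T)=-1$ and, since $\rho_0$ factors through $\Gamma$, $\det\rho_0=\chi^6$. Tracing the free-module splitting $M(\rho_0)\simeq R[-k_1]\oplus R[-k_2]$ through its effect on the determinant line bundle (matching $\det\cV(\rho_0)$ against $\cV(\det\rho_0)$ up to twist by powers of $\cL_{-12}$) yields a congruence
$$
k_1+k_2\equiv\text{wt}(\det\rho_0)\pmod{12},
$$
which in our situation reads $k_1+k_2\equiv 6\pmod{12}$. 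Combined with $k_i$ even (since $\rho_0$ factors through $\Gamma$), $10\in\{k_1,k_2\}$, and $k_i\in\{0,\ldots,11\}$, this forces $\{k_1,k_2\}=\{8,10\}$, completing part (i).

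For part (ii), the remaining case $(k_1,k_2)=(8,10)$, $a=0$ has $K$ concentrated in degree $0$ with $K_0=\CC$ by Theorem \ref{theorem:keyMSplitProp}, so the only possibly nonzero component of $\delta$ is $\delta_0\colon M_0(\rho_1)=\CC\to K_0=\CC$. To show $\delta_0$ is an isomorphism I follow the argument in the proof of Theorem \ref{theorem:2Dim}: if $\delta_0=0$, then $M_0(\rho)\neq 0$; since $M_2(\rho_0)=0$ (both generating weights exceed $2$) and $M_2(1)=0$, also $M_2(\rho)=0$, so the modular derivative $DF$ of any $F\in M_0(\rho)$ vanishes, forcing $F$ to be a constant $\rho$-invariant vector. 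The line $\CC F$ is then a trivial subrepresentation of $\rho$; combined with the irreducible $\rho_0$ (which has no invariants), this splits $\rho\simeq\rho_0\oplus 1$, contradicting indecomposability.

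The generating weights $(4,6,8)$ then follow by tabulating $\dim M_k(\rho)$ for $k=0,2,4,6,8,10,12$ from the long exact sequence $0\to M_k(\rho_0)\to M_k(\rho)\to M_k(1)\to K_k$, obtaining dimensions $(0,0,1,1,2,2,3)$ that match the Hilbert series of $R[-4]\oplus R[-6]\oplus R[-8]$; by the free-module theorem this identifies $M(\rho)$. The principal obstacle is the weight-sum congruence invoked in step (i), which requires tracking how the canonical extension interacts with determinants near the cusp and is not explicitly developed in the preceding sections.
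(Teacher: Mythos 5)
Your proof is correct and follows essentially the same route as the paper: Corollary \ref{corollary:3DimIrreducibleCriterion} together with the exponent relation $12\Tr(L)=k_1+k_2$ (which the paper does invoke from \cite{CandeloriFranc}, so the ``obstacle'' you flag is already available and is exactly how the paper ties $a$ to $(k_1,k_2)$) isolates the case $(8,10)$, $a=0$, and the constant-invariant-vector obstruction then rules out $M$-splitness. The only local differences are that the paper narrows the pairs via $k_2=k_1+2$ and a small table rather than your determinant congruence, and it proves $F\notin\mathrm{span}\{v_1,v_2\}$ by showing $1$ is not an eigenvalue of $\rho_0(T)$, whereas your appeal to the irreducibility of $\rho_0$ (so $\CC F\cap\rho_0=0$) is cleaner; your explicit Hilbert-series verification of the weights $(4,6,8)$ supplies a step the paper leaves implicit.
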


\begin{proof}
Let $(k_1,k_2)$ be the generating weights of $\rho_0$. Then 
$$
-\det \rho_0(T) = \e{\frac{k_1 + k_2 + 6}{12}}
$$
and thus the two choices for $\lambda_3$ (hence for $a$) are completely determined by the generating weights of $\rho_0$. Since $\rho$ has finite image, its generating weights lie in the interval $[0,11]$. The weights must be even, since all $\G$-representations have even weights, and since $\rho$ is irreducible we must have that $k_2 = k_1 + 2$. This leaves only 5 possibilities for the pair $(k_1,k_2)$. For each possible pair, we also compute in the table below both choices of $a$ such that $\rho$ is indecomposable: 

\begin{center}
\begin{tabular}{c|c}

$(k_1,k_2)$ & $2a$ \\ 
\hline 
(0,2) & 4,10 \\ 
(2,4) & 0,6 \\
(4,6) & 2,8 \\ 
(6,8) & 4,10 \\
(8,10)& 0,6 \\
\hline 
\end{tabular} 
\end{center}

By part (i) of Corollary \ref{corollary:3DimIrreducibleCriterion}, all $\rho_0$ with pairs of generating weights different than $(8,10)$ give rise to $M$-split representations $\rho$. For the pair $(8,10)$, there are two choices of $a$ given by $a=0,3$. When $a=3$, we again conclude by part (i) of  Corollary \ref{corollary:3DimIrreducibleCriterion} that $\rho$ is $M$-split. It remains to consider the case of $(k_1,k_2) = (8,10)$ and $a=0$. In this case, suppose that $\rho$ is $M$-split, so that its generating weights are $(0,8,10)$. Let $F \in M_0(\rho)$ be a modular form of minimal weight. Then $F'\in M_2(\rho)$ must be equal to 0, since there are no modular forms of weight 2 for $\rho$, thus $F \in V$ is a constant $\rho$-invariant vector. Choose a basis $v_1,v_2$ for the subrepresentation $\rho_0 \subseteq \rho$. Then $\{v_1,v_2,F\}$ must be linearly independent, since they are all eigenvectors of $\rho(T)$ corresponding to different eigenvalues. Indeed, the eigenvalue $\lambda = 1$ corresponding to $F$ cannot be an eigenvalue of $\rho_0(T)$: if we let $L$ be the unique matrix such that $e^{2\pi i L} = \rho_0(T)$ and such that the real parts of the eigenvalues of $L$ lie in $[0,1)$ (the {\em standard choice of exponents}) then we know that $L$ must satisfy $12\Tr(L) = k_1 + k_2 = 18$ (\cite{CandeloriFranc}). But if one of the eigenvalues of $\rho_0(T)$ is 1, then one of the eigenvalues of $L$ is 0, which means the other one must be equal to 18/12, contradicting the choice of logarithm. Therefore $\{v_1,v_2,F\}$ must be linearly independent, hence they must form a basis for $V$, which is impossible since then $\rho \sim \rho_0\oplus 1$. Thus in this case $\rho$ is not $M$-split. 
\end{proof}

\subsection{$\rho_0$ is reducible but indecomposable}

In this case, the $\lambda_j$ are all 6-th roots of unity and we may write $\lambda_j=\e{\frac{x_j}{6}}$, for some $x_j = 0,\ldots, 5$. The isomorphism class of $\rho$ is entirely determined by the triple $(x_1,x_2,x_3)$ and there are 18 such possible triples, listed in Tables \ref{table2} and \ref{table3}.  

\begin{thm}
\label{thm:rho0indecomposablegeneratingweights}
Suppose $\rho_0$ is reducible but indecomposable, and write $\rho_{(x_1,x_2,x_3)}$ for the unique (up to equivalence) representation determined by one of the triples $(x_1,x_2,x_3)$ of Tables \ref{table2} and \ref{table3}.
\begin{itemize}
\item[(i)] If $(x_1,x_2,x_3) \neq (5,4,0),(5,0,1),(4,5,0)$, then $\rho_{(x_1,x_2,x_3)}$ is $M$-split and its generating weights are $(2x_1,2x_2,2x_3)$. 
\item[(ii)] If $(x_1,x_2,x_3) = (5,0,1)$, then $\rho_{(x_1,x_2,x_3)}$ is also $M$-split and its generating weights are $(2,4,6)$.  
\item[(iii)] If $(x_1,x_2,x_3) = (5,4,0), (4,5,0)$, then $\rho_{(x_1,x_2,x_3)}$ is not $M$-split and its generating weights are $(4,6,8)$. 
\end{itemize}
\end{thm}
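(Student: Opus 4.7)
The plan is to treat the 18 triples from Tables \ref{table2} and \ref{table3} by first applying Corollary \ref{corollary:3DimIrreducibleCriterion} to dispose of the $M$-split cases, then handling the few exceptional triples explicitly via arguments patterned on the proof of Theorem \ref{theorem:2Dim}.

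First I would determine the generating weights of $\rho_0$. Viewed as a two-dimensional indecomposable $0 \to \chi^{2x_1} \to \rho_0 \to \chi^{2x_2} \to 0$, Theorem \ref{theorem:2Dim} applied with $(a,b)=(2x_1,2x_2)$ gives generating weights $(2x_1,2x_2)$ except in the single exceptional case $(x_1,x_2)=(5,0)$, where they are $(4,6)$. (The other exceptional pair $(11,1)$ cannot occur since $2x_1,2x_2$ are even.) Of the 18 triples, only $(5,0,1)$ has $(x_1,x_2)=(5,0)$. Next, Corollary \ref{corollary:3DimIrreducibleCriterion} says $\rho$ is $M$-split unless $x_3=0$ and $10$ is a generating weight of $\rho_0$; inspection of the tables shows these conditions are jointly met only for $(5,4,0)$ and $(4,5,0)$. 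Every other triple therefore gives an $M$-split $\rho$, whose generating weights are the union of those of $\rho_0$ and $\chi^{2x_3}$. This yields $(2x_1,2x_2,2x_3)$ in part (i) and $(2,4,6)$ in part (ii).

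For part (iii), I would mimic the proof of Theorem \ref{theorem:2Dim}(2). Assume for contradiction that $\rho_{(5,4,0)}$ or $\rho_{(4,5,0)}$ is $M$-split; then its generating weights would be $\{0,8,10\}$, and in particular $M_0(\rho)$ is one-dimensional. A nonzero $F \in M_0(\rho)$ satisfies $F' \in M_2(\rho) = 0$ (since $2$ is not a generating weight), so $F$ is a constant $\rho$-invariant vector and a $\rho(T)$-eigenvector with eigenvalue $1$. As the eigenvalues of $\rho_0(T)$ are $\e{4/6}$ and $\e{5/6}$, $F \notin \rho_0$, so $\spn{F}$ is a one-dimensional invariant complement to $\rho_0$, yielding $\rho \cong \rho_0 \oplus 1$ and contradicting indecomposability.

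Finally, to pin down the generating weights in the non-split cases, I would use the degree-by-degree exact sequence
\[
0 \to M_k(\rho_0) \to M_k(\rho) \to M_k(1) \stackrel{\delta_k}\to H^1(\cMbar, \cV_k(\rho_0)) \to H^1(\cMbar, \cV_k(\rho)) \to H^1(\cMbar, \cL_k) \to 0
\]
together with the standard vanishing $H^1(\cMbar, \cL_j) = 0$ for $j \in \{0, -1, \ldots, -9, -11\}$ and $\dim H^1(\cMbar, \cL_{-10}) = 1$. Only $\delta_0$ can be nonzero among the $\delta_k$ for $k \geq 0$, and non-splitness forces $\delta_0$ to be an isomorphism. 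This gives $M_0(\rho) = M_2(\rho) = 0$, $\dim M_4(\rho) = \dim M_6(\rho) = 1$, and $\dim M_8(\rho) = 2$. Since the generators in weights $4$ and $6$ contribute only one copy of $\mathbb{C}$ in weight $8$ (namely $E_4$ times the weight-$4$ generator), a third generator must appear in weight $8$, giving generating weights $(4,6,8)$. The main obstacle is the non-splitness argument in the previous paragraph; once it is in place, both the dimension count and the identification of the weight-$8$ generator are routine.
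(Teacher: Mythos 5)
Your proposal is correct and follows essentially the same route as the paper: Corollary \ref{corollary:3DimIrreducibleCriterion} plus Theorem \ref{theorem:2Dim} to isolate $(5,4,0)$ and $(4,5,0)$ as the only non-split candidates, the constant-invariant-vector contradiction (as in Theorem \ref{theorem:rho0IrreducibleGenWeights}(ii)) to rule out $M$-splitness there, and the identity $M_k(\rho)=M_k(\rho_0)\oplus M_k(1)$ for $k\geq 2$ with $M_0(\rho)=0$ to read off the weights $(4,6,8)$. Your degree-by-degree dimension count in the last step just makes explicit what the paper states in one line.
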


\begin{proof}
By Corollary \ref{corollary:3DimIrreducibleCriterion}, the only possible $\rho_{(x_1,x_2,x_3)}$'s that are not $M$-split must have $x_3 = 0$ and one of the generating weights of $\rho_0$ must be equal to 10. By Theorem \ref{theorem:2Dim}, we know that the generating weights of $\rho_0$ are $2x_1, 2x_2$, unless $x_1 = 5$ and $x_2 = 0$, in which case the generating weights of $\rho_0$ are $(4,6)$. By going through Tables \ref{table2} and \ref{table3} we get part (i) and (ii). For part (iii), we use the same argument as in Thm. \ref{theorem:rho0IrreducibleGenWeights}, part (ii). Namely, if $\rho$ were $M$-split then the generating weights would have to be $(0,8,10)$, which would imply that there exists an invariant vector $F\in V^{\rho}$. But $\rho_0$ is upper-triangular with $\chi^8, \chi^{10}$ on the diagonal, so $F$ is linearly independent from $\rho_0$ and we could form a basis $\{v_1,v_2,F\}$ giving a splitting $\rho \sim \rho_0\oplus 1$, which is impossible. Therefore both representations $\rho_{(5,4,0)}$ and $\rho_{(4,5,0)}$ are not $M$-split, and their generating weights can be computed by noting that in each case $M_0(\rho)=0$ and $M_k(\rho) = M_k(\rho_0)\oplus M_k(1)$ for all $k\geq 2$. 
\end{proof}

\subsection{$\rho_0$ is completely reducible} In this case $\rho_{(x_1,x_2,x_3)}$ is determined (up to equivalence) by one of the 6 triples of Table \ref{table1}.
 
\begin{thm}
\label{thm:rho0CompletelyReducibleGeneratingWeights}
Suppose $\rho_0 \sim \chi^{2x_1}\oplus\chi^{2x_2}$ is completely reducible, and write $\rho_{(x_1,x_2,x_3)}$ for the unique (up to equivalence) representation determined by one of the triples $(x_1,x_2,x_3)$ of Table \ref{table1}. 
\begin{itemize}
\item[(i)] If $(x_1,x_2,x_3) \neq (1,5,0)$, then $\rho_{(x_1,x_2,x_3)}$ is $M$-split and its generating weights are $(2x_1,2x_2,2x_3)$. 
\item[(ii)]If $(x_1,x_2,x_3) = (1,5,0)$, then $\rho_{(1,5,0)}$ is not $M$-split and its generating weights are $(2,4,6)$. 
\end{itemize}
\end{thm}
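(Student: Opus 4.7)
The proposal is to follow the same template already deployed in the proofs of Theorem \ref{theorem:rho0IrreducibleGenWeights} and Theorem \ref{thm:rho0indecomposablegeneratingweights}, adapted to the completely reducible case. The first step is to apply Corollary \ref{corollary:3DimIrreducibleCriterion} to locate all triples in Table \ref{table1} for which $\rho_{(x_1,x_2,x_3)}$ could possibly fail to be $M$-split. Since $\rho_0 = \chi^{2x_1}\oplus \chi^{2x_2}$ has generating weights $(2x_1, 2x_2)$, the criterion requires simultaneously that $x_3 = 0$ and that $10\in\{2x_1,2x_2\}$. A direct inspection of the six triples in Table \ref{table1} shows that only $(1,5,0)$ satisfies both conditions, which immediately gives part (i): for all other triples $\rho$ is $M$-split, so that $M(\rho)\simeq M(\rho_0)\oplus M(\chi^{2x_3})$ and the generating weights are simply the concatenation $(2x_1,2x_2,2x_3)$.

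For part (ii), where $(x_1,x_2,x_3) = (1,5,0)$, the first task is to verify that $\rho_{(1,5,0)}$ is actually not $M$-split. The argument mirrors the one in Theorem \ref{theorem:rho0IrreducibleGenWeights}(ii) and Theorem \ref{thm:rho0indecomposablegeneratingweights}(iii): if $\rho$ were $M$-split, its generating weights would be $(0,2,10)$, so there would exist a modular form $F\in M_0(\rho)$ of minimal weight zero. Since $M_2(\rho) = 0$ in this case (as we will verify by dimension bookkeeping below), the equation $F' \in M_2(\rho)$ forces $F' = 0$, hence $F$ is a constant $\rho$-invariant vector. The three vectors consisting of a basis $\{v_1,v_2\}$ of the diagonalized $\rho_0 = \chi^2\oplus \chi^{10}$ together with $F$ are eigenvectors of $\rho(T)$ for the three distinct eigenvalues $\e{1/6},\e{5/6},1$, so they form a basis of $V$. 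This would yield a decomposition $\rho\sim \chi^2\oplus\chi^{10}\oplus 1$, contradicting indecomposability. Hence $\delta_0 \neq 0$ and $\rho$ is not $M$-split.

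To pin down the generating weights, I would then argue exactly as in Theorem \ref{thm:rho0indecomposablegeneratingweights}(iii) that $\delta$ is supported in degree $0$ only, so $M_0(\rho) = 0$ while $M_k(\rho) = M_k(\rho_0)\oplus M_k(1)$ for all $k\geq 2$. Computing using $M(\chi^2) = R[-2]$, $M(\chi^{10}) = R[-10]$ and $R = \CC[E_4,E_6]$, this gives $\dim M_2(\rho) = 1$, $\dim M_4(\rho) = 1$, $\dim M_6(\rho) = 2$ in the lowest degrees. Matching these dimensions against the Hilbert series of a free rank-three module $R[-k_1]\oplus R[-k_2]\oplus R[-k_3]$ forces $(k_1,k_2,k_3) = (2,4,6)$.

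The main obstacle I anticipate is the internal consistency between the non-$M$-split argument and the dimension calculations: the invariant-vector contradiction invokes $M_2(\rho) = 0$, which in turn follows from the formula $M_k(\rho) = M_k(\rho_0)\oplus M_k(1)$ for $k\geq 2$ combined with $M_2(\chi^2) \oplus M_2(\chi^{10}) \oplus M_2(1) = 0$. One should therefore order the presentation carefully so that the dimension computation $M_2(\rho)=0$ is established from the sheaf-theoretic exact sequence first, and only then used inside the $M$-splittability argument.
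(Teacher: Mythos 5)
Your part (i) is correct and follows the paper's argument exactly: among the six triples of Table \ref{table1} only $(1,5,0)$ has $x_3=0$ together with $10\in\{2x_1,2x_2\}$, so Corollary \ref{corollary:3DimIrreducibleCriterion} settles everything else. The gap is in part (ii), and it is precisely the consistency issue you flagged at the end but then resolved the wrong way. In the hypothetical $M$-split scenario the generating weights are $(0,2,10)$, and $M_2(\rho)\simeq M_2(\chi^2)\oplus M_2(\chi^{10})\oplus M_2(1)$. But $M_2(\chi^2)=R_0=\CC\,\eta^4$ is one-dimensional (the generating weight of $\chi^2$ is $2$), so $M_2(\rho)$ is one-dimensional, not zero --- as your own Hilbert-series bookkeeping later confirms when you compute $\dim M_2(\rho)=1$. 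Your claim that $M_2(\chi^2)\oplus M_2(\chi^{10})\oplus M_2(1)=0$ is simply false, and with it the step ``$F'\in M_2(\rho)=0$, hence $F$ is a constant invariant vector'' collapses: all you can conclude is $D_0F_0=\lambda F_2$ for some scalar $\lambda$ and some generator $F_2$ of $M_2(\rho)$. This is exactly why the completely reducible case cannot be handled by the same template as Theorems \ref{theorem:rho0IrreducibleGenWeights}(ii) and \ref{thm:rho0indecomposablegeneratingweights}(iii), where the putative $M$-split weights are $(0,8,10)$ and $M_2(\rho)$ genuinely vanishes.

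The paper closes this gap with a second-order ODE argument. One first observes that $F_0$ cannot be constant (a constant would be an invariant vector and would split off a trivial summand). Then $F_2$ is forced to be $(\eta^4,0,0)$ up to scalar, so $D_2F_2=0$ and hence $D^2F_0=0$. By \cite[Ex 22]{FrancMason2}, the solution space of $D^2$ is spanned by vector-valued forms for a two-dimensional indecomposable-but-reducible subrepresentation $\rho'\subseteq\rho$, which (since $\rho'$ cannot contain the trivial character) must be an extension $0\to\chi^2\to\rho'\to 1\to 0$; pairing a basis of $\rho'$ with a generator of the $\chi^{10}$-line, linearly independent by the $\rho(T)$-eigenvalues, forces $\rho\simeq\rho'\oplus\chi^{10}$, contradicting indecomposability. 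You would need to import this (or an equivalent) additional input. The remainder of your proposal --- the reduction via Corollary \ref{corollary:3DimIrreducibleCriterion} and the Hilbert-series identification of $(2,4,6)$ once non-$M$-splitness is established --- is sound.
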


\begin{proof}
\noindent

\begin{itemize}
\item[(i)] The generating weights of $\rho_0$ are simply $(2x_1,2x_2)$, thus part (i) follows directly from Corollary \ref{corollary:3DimIrreducibleCriterion} by going through Table \ref{table1}.

\item[(ii)] For $\rho=\rho_{(1,5,0)}$, there are only two possibilities for the generating weights: $(2,4,6)$ (not $M$-split case) and $(0,2,10)$ ($M$-split case). Suppose that the latter holds. Then $M_0(\rho)$ is 1-dimensional, say generated by a modular form $F_0$ of weight 0, and $M_2(\rho)$ is also 1-dimensional, say generated by a modular form $F_2$ of weight 2. As in the proof of Theorem \ref{theorem:rho0IrreducibleGenWeights} (ii), this $F_0$ cannot be constant, for otherwise it would give an invariant vector and we could decompose $\rho$ as $1\oplus \chi^2 \oplus \chi^{10}$. For each $k\in\ZZ$, consider now the modular derivative $D$ that acts as the weight-two operator
$$
D = \frac{1}{2\pi i }\frac{d}{d\tau} - \frac{k}{12}E_2
$$  
on $M_k(\rho)$, sending an $f$ in this space to $Df\in M_{k+2}(\rho)$; here $E_2$ denotes the Eisenstein series in weight two. Applying this operator componentwise, we obtain 
$$
D_0F_0 = \lambda F_2
$$
for some $\lambda \in \CC$. On the other hand, up to a costant $F_2$ must be the modular form $(\eta^4, 0, 0)$, where $\eta$ is Dedekind's eta function, so we can assume $F_2 = \eta^4$. It follows that $D_2F_2 = 0$, that is 
$$
D^2 F_0 = 0. 
$$ 
As shown in \cite[Ex 22]{FrancMason2}, the solution space of the differential operator $D^2$ is spanned by vector-valued modular forms with respect to a 2-dimensional, indecomposable-but-reducible subrepresentation $\rho'$ of $\rho$ containing either the character $1$ or $\chi^2$ and having either $\chi^2$ or $1$ as a quotient, respectively. As explained above, $\rho'$ may not contain the character $1$ and therefore $\rho'$ is of the form
$$
0\rightarrow \chi^2 \rightarrow \rho' \rightarrow 1 \rightarrow 0. 
$$
Pick now a basis $\{v_1, v_2\}$ for $\rho'$. We also know that $\rho$ contains the character $\chi^{10}$, so let $v$ be a generator for the line defined by this character. Then $v$ cannot be linearly dependent on $\{v_1, v_2\}$, because it has eigenvalue $\e{\frac{10}{12}}$ under the action of $\rho(T)$, while $\rho'$ breaks up into two eigenspaces for $\rho(T)$ with eigenvalues $1$ and $e^{2\pi i 2/12}$. Thus $\{v_1, v_2, v\}$ is a basis for $\rho$, but this is impossible since then $\rho \simeq \rho' \oplus \chi^{10}$, contradicting the fact that $\rho$ is indecomposable. Therefore $\rho'$ is not $M$-split and the generating weights are $(2,4,6)$. 

\end{itemize}

\end{proof}

\subsection{$\rho_0$ is one-dimensional}
The only case to consider is that of $\rho$ being of the form 
$$
1 \rightarrow \chi^{2a} \rightarrow \rho \rightarrow \rho_1 \rightarrow 1,
$$
where $\rho_1 = \rho_0^*$ is the dual of an irreducible two-dimensional representation $\rho_0$, so that $\rho^*\supseteq \rho_0$ contains a two-dimensional irreducible sub-representation, as in Section \ref{subsection:2dimIsIrreducible}. 

\begin{thm}
\label{thm:rho0oneDimGenWeights}
Suppose $\rho$ has a one-dimensional sub-representation of the form $\chi^{2a}$ and a two-dimensional irreducible quotient $\rho_1$, with generating weights $k_1, k_2$. Then $\rho$ is $M$-split, so that its generating weights are $(2a,k_1,k_2)$. 
\end{thm}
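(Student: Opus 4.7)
The plan is to invoke Theorem~\ref{theorem:keyMSplitProp} on the extension $0\to\chi^{2a}\to\rho\to\rho_1\to 0$, taking the sub-representation $\rho_0:=\chi^{2a}$, whose unique generating weight is $2a\in\{0,2,4,6,8,10\}$. Case (ii) of that theorem requires $11$ to be a generating weight of $\rho_0$, which is impossible by parity. Case (i) requires $2a=10$ together with $0$ being a generating weight of $\rho_1$, so the main step is to rule out the condition $k_1=0$ for our two-dimensional irreducible quotient.

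I would argue that $M_0(\rho_1)=0$ for any two-dimensional irreducible $\rho_1$ of finite image factoring through $\G$. Indeed, a nonzero weight-zero holomorphic VVMF for $\rho_1$ is an equivariant holomorphic map $f:\hstar\to V_{\rho_1}$. Choosing a finite-index normal subgroup $\Gamma'\triangleleft\G$ on which $\rho_1$ is trivial, $f$ descends to a holomorphic function on the compact modular curve $X(\Gamma')$ with values in the vector space $V_{\rho_1}$; hence $f$ must be constant, say $f\equiv v\in V_{\rho_1}$. Equivariance under $\G$ then gives $\rho_1(\gamma)v=v$ for all $\gamma\in\G$, contradicting the irreducibility and non-triviality of $\rho_1$ unless $v=0$. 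Therefore $M_0(\rho_1)=0$, so $k_1\geq 2$ (the parity coming from $\rho_1$ factoring through $\G$), and in particular $0$ is not a generating weight of $\rho_1$.

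Neither exceptional case of Theorem~\ref{theorem:keyMSplitProp} can occur, so $\rho$ is $M$-split; this yields the isomorphism of graded $R$-modules $M(\rho)\simeq M(\chi^{2a})\oplus M(\rho_1)$, and consequently the generating weights of $\rho$ are the multiset $(2a,k_1,k_2)$. The argument is direct, and I do not foresee any substantial obstacle beyond the vanishing of $M_0(\rho_1)$, which is a standard compactness consequence once a finite quotient through which $\rho_1$ factors is fixed.
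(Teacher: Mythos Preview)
Your proof is correct and is more direct than the paper's. Both arguments start by applying Theorem~\ref{theorem:keyMSplitProp} to reduce to the case $2a=10$, with the task of showing that $0$ is not a generating weight of the irreducible quotient $\rho_1$. The paper proceeds by duality: it passes to $\rho^\ast$, which now has the two-dimensional irreducible $\rho_1^\ast$ as a subrepresentation and $\chi^2$ as quotient, reads off from the table of Section~\ref{subsection:2dimIsIrreducible} that $\rho_1^\ast$ must have generating weights $(4,6)$, and then computes (splitting on whether $1$ is an eigenvalue of $\rho_1^\ast(T)$, invoking Mason's classification to rule out one sub-case) that $\rho_1$ has generating weights $(6,8)$, so Theorem~\ref{theorem:keyMSplitProp} applies. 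You instead prove directly that $M_0(\rho_1)=0$: a weight-zero holomorphic form for a finite-image representation descends to a holomorphic function on a compact curve, hence is constant, and a nonzero constant would produce an invariant line in the irreducible $\rho_1$. This is the same ``constant $\Rightarrow$ invariant vector $\Rightarrow$ splitting'' mechanism the paper uses repeatedly in Theorems~\ref{theorem:rho0IrreducibleGenWeights}--\ref{thm:rho0CompletelyReducibleGeneratingWeights}, so it is entirely in the spirit of the paper; you have simply noticed that it applies here without any detour through duality or the Section~\ref{subsection:2dimIsIrreducible} table. The only caveat is that your argument uses finite image of $\rho_1$ rather than merely the Section~4 standing hypothesis that the generating weights lie in $\{0,\dots,11\}$; since finite image is the intended context (and is what the summary theorem in the Introduction states), this is not a genuine gap.
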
 

\begin{proof}
By Corollary \ref{corollary:3DimIrreducibleCriterion}, $\rho$ is always $M$-split if $2a \neq 10$. If $2a = 10$ then  $\rho^*$ is an indecomposable representation with a two-dimensional irreducible subrepresentation $\rho_0 = \rho_1^*$ and $\chi^{2}$ as a quotient. By Section \ref{subsection:2dimIsIrreducible}, this can only happen if the generating weights of $\rho_0$ are $(4,6)$. If $\rho_0(T)$ does not have 1 as an eigenvalue, then the generating weights of $\rho_1 = \rho_0^*$ are $(6,8)$, and thus $\rho$ is $M$-split by Corollary \ref{corollary:3DimIrreducibleCriterion}. If $\rho_0(T)$ has 1 as an eigenvalue, then 
$$
\rho_0^*(T) \sim \twobytwo{1}{0}{0}{\e{1/6}},
$$
which is impossible, since $\rho_1 = \rho_0^*$ is irreducible \cite[Thm 3.1]{Mason}. Therefore $\rho$ is always $M$-split. 
\end{proof}

\section{Periods of modular curves}
\label{section:periods}

We give in this final section a motivation from Riemann surface theory for classifying indecomposable representations of the modular group. Explicitly, we show how for appropriate representations of this type the entries of the matrices representing certain hyperbolic elements may be interpreted as the periods of a modular curve associated to the representation.

Suppose $X$ is a compact, connected Riemann surface of genus $g$, and let $\{\alpha_1, \ldots,\alpha_g\}$,  $\{\beta_1, \ldots \beta_g\}$ be a standard basis for $H^1(X,\ZZ)$, so that 
\begin{align*}
\langle \alpha_i, \alpha_j \rangle &= 0 = \langle \beta_i, \beta_j \rangle, \\
\langle \alpha_i, \beta_j \rangle &=  \delta_{ij}, \quad \quad i,j=1,\ldots, g 
\end{align*}
with respect to the intersection pairing $\langle \cdot, \cdot \rangle$ on $H^1(X,\ZZ)$ \cite[III]{Farkas-Kra}. Let $\Omega=\{\omega_1, \ldots, \omega_g\}$ be a basis of holomorphic differential 1-forms for $H^0(X,\Omega^1_X)$ and let $A(\Omega)$, $B(\Omega)$  be the $g\times g$-matrices whose $(i,j)$-th entries are given by
$$
A(\Omega)_{ij} = \int_{\alpha_i} \omega_j, \quad B(\Omega)_{ij} = \int_{\beta_i} \omega_j.
$$
\begin{defn}
The {\em period matrix} of $X$, with respect to the basis $\{\alpha_1, \ldots,\alpha_g, \beta_1, \ldots \beta_g\}$ for the homology $ H^1(X,\ZZ)$, is the $g\times g$-matrix given by
$$
P(X):= B(\Omega)A(\Omega)^{-1}.
$$
\end{defn}

Note that by the Riemann bilinear relations $A(\Omega)$ is invertible so that $P(X)$ is well-defined \cite[III]{Farkas-Kra}. It is clearly independent of the choice of basis $\Omega$. We also have that $P(X)$ is symmetric with positive-definite imaginary part \cite[III]{Farkas-Kra}, and  therefore $P(X) \in \mathbb{H}_g$, the $g$-dimensional Siegel's upper half-space. Note that a change of oriented basis $\{\alpha_1, \ldots,\alpha_g, \beta_1, \ldots \beta_g\}$ will change $P(X)$ by an element of $\mathrm{Sp}_{2g}(\ZZ)$.

Let now $G$ denote a finite index subgroup of $\G$ and let $g$ denote the genus of the modular curve $X_G=G\backslash\hstar$ associated to $G$. Fix the base point $i\infty$ for the homology group $H^1(X_G,\bb{Z})$ of closed paths relative to this base point. Then by \cite[Prop 1.4]{Manin} there are hyperbolic matrices $g_j\in G$, $1\leq j\leq2g$, such that the paths $\{i\infty,g_j(i\infty)\}$ give an oriented basis for $H^1(X_G,\bb{Z})$. The period matrix of $X_G$ can then be computed by  integrating a basis $\{f_1,\cdots,f_g\}$ for the space $S_2(G)\simeq H^0(X_G,\Omega^1_{X_G})$ of weight two cusp forms for $G$, over the above paths in $\hstar$. Denote these periods by
$$\Omega_{jk}=\int_{i\infty}^{g_j(i\infty)}f_k(z)\,dz.$$ 

Suppose $G$ is normal in $\G$. The $|_2$-action of $\G$ gives a linear representation of $\Gamma/G$ on $S_2(G)$, so that $F_0=(f_1,\cdots,f_g)^t$ is a weight two vector-valued cusp form for a representation $\rho_0:\G\rightarrow\gln{g}$ whose kernel contains $G$. By employing the modular Wronskian \cite{M1} one obtains a $g^{th}$ order \emph{modular differential equation} $L[f]=0$, in weight two, whose solution space is spanned by the $f_j$. 
For each $1\leq k\leq g$ let 
$$u_k(\tau)=\int_{i\infty}^\tau f_k(z)\,dz,$$
so that $\Omega_{jk}=u_k(g_j(i\infty))$, and let $F = (u_1, \ldots, u_g,1)^t$. The transformation properties of $F$ are well-known to experts, but we record them here for lack of a better reference:

\begin{thm}
\label{thm:MainPeriodThm}
$F$ is a holomorphic vector-valued modular function for an indecomposable representation $\rho: \Gamma\rightarrow \GL_{g+1}(\CC)$ of the form 
$$
0 \rightarrow \rho_0 \rightarrow \rho \rightarrow 1 \rightarrow 0,
$$
with $\rho(T)$ diagonalizable. 
\end{thm}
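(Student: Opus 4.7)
The plan is to derive the transformation law of the primitives $u_k$ by change of variable, package the result into the claimed matrix $\rho(\gamma)$, and then prove indecomposability by showing that any splitting would force the periods of $F_0\,dz$ on $X_G$ to lie in a proper subspace of $\mathbb{C}^g$.

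First, for $\gamma = \smalltwobytwo{a}{b}{c}{d}\in\Gamma$, the substitution $z = \gamma w$ gives
$$u_k(\gamma\tau) = \int_{\gamma^{-1}(i\infty)}^{\tau}(f_k|_2\gamma)(w)\,dw = \sum_j\rho_0(\gamma)_{kj}\,u_j(\tau) + \kappa(\gamma)_k,$$
where $\kappa(\gamma):=U(\gamma\cdot i\infty)\in\mathbb{C}^g$ for $U=(u_1,\ldots,u_g)^t$. This rewrites as $F(\gamma\tau)=\rho(\gamma)F(\tau)$ with $\rho(\gamma)=\smalltwobytwo{\rho_0(\gamma)}{\kappa(\gamma)}{0}{1}$, and the multiplicativity $\rho(\gamma_1\gamma_2)=\rho(\gamma_1)\rho(\gamma_2)$ is forced by comparing the two expressions for $F(\gamma_1\gamma_2\tau)$, together with the linear independence of the components of $F$ on $\mathbb{H}$ (which reduces to $\{f_k\}$ being linearly independent). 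Since $T\cdot i\infty = i\infty$ we have $\kappa(T)=0$, so $\rho(T)=\rho_0(T)\oplus 1$; this is diagonalizable because $\rho_0$ factors through the finite quotient $\Gamma/G$, whence $\rho_0(T)$ has finite order. Holomorphicity of $F$ on $\hstar$ is immediate from termwise integration of the cusp-form $q$-expansion at $i\infty$, and the transformation law transports holomorphicity to every other cusp.

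The main point is indecomposability. Suppose for contradiction that $\rho = W_1\oplus W_2$ with both summands nonzero. Since the quotient $\rho/\rho_0$ is one-dimensional, at least one summand, say $W_1$, surjects onto it; combined with $W_2\neq 0$ this gives $\dim W_1\leq g$, so $W_1\cap\rho_0$ is a proper $\Gamma$-invariant subspace of $\rho_0$ of dimension $\dim W_1-1$. Pick any lift $v\in W_1$ of the generator of the quotient: the vector $\rho(\gamma)v-v$ lies in $W_1$ by invariance and in $\rho_0$ because $v$ projects to an invariant in the one-dimensional quotient, hence in $W_1\cap\rho_0$. The new cocycle $\gamma\mapsto\rho(\gamma)v-v$ differs from the original $\kappa$ by a $\rho_0$-valued coboundary, which vanishes on $G\subseteq\ker\rho_0$; therefore $\kappa(g)\in W_1\cap\rho_0\subsetneq\rho_0$ for every $g\in G$.

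To derive the contradiction, choose a nonzero linear functional $\ell\colon\rho_0\to\mathbb{C}$ vanishing on $W_1\cap\rho_0$. Then $\omega:=\sum_k\ell_k f_k\,dz$ is a nonzero holomorphic $1$-form on $X_G$ by the linear independence of $\{f_k\}$, and
$$\int_{i\infty}^{g\cdot i\infty}\omega = \ell(\kappa(g)) = 0\quad\text{for all }g\in G.$$
By \cite[Prop 1.4]{Manin}, such cycles span $H_1(X_G,\mathbb{Z})$, so every period of $\omega$ vanishes; hence $\omega$ is an exact holomorphic form on the compact Riemann surface $X_G$, forcing $\omega=0$. The main obstacle is precisely the bookkeeping in the previous paragraph: turning the abstract splitting hypothesis into a concrete proper $\Gamma$-invariant subspace of $\rho_0$ containing every period $\kappa(g)$, after which Manin's theorem together with the standard fact that a holomorphic $1$-form with all periods zero must vanish yields the contradiction essentially for free.
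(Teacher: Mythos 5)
Your proof is correct, but it takes a genuinely different route from the paper's. The paper establishes modularity of $F$ abstractly: the components of $F$ span the solution space of the modular differential equation $L\theta[f]=0$ obtained from the modular Wronskian, and covariance of the modular derivative then forces $F$ to be a vector-valued modular function for some $\rho$ whose upper-left block is $\rho_0$ (since $\theta F=(F_0,0)^t$ has weight two for the same $\rho$). You instead compute the Eichler cocycle directly by the substitution $z=\gamma w$, which is more hands-on and makes $\kappa(\gamma)=U(\gamma\cdot i\infty)$ explicit --- this is exactly the identification the paper needs later in \eqref{equation:mainPeriodComputation} anyway. The larger divergence is in the indecomposability step. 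The paper argues in one line: $F$ is holomorphic, non-constant and of weight zero, so $[\G:\ker\rho]=\infty$, whereas a decomposable $\rho$ would factor through the finite-index subgroup $G=\ker\rho_0$. Your argument is longer but more self-contained: you show that any splitting $\rho=W_1\oplus W_2$ forces the period vectors $\kappa(g)$, $g\in G$, into a proper invariant subspace $W_1\cap\rho_0$, and then kill the resulting holomorphic $1$-form $\omega=\sum_k\ell_k f_k\,dz$ on the compact surface $X_G$ because all its periods vanish (using Manin's generators of $H_1(X_G,\ZZ)$ by paths $\{i\infty,g(i\infty)\}$). This buys you a direct treatment of arbitrary decompositions $W_1\oplus W_2$ (not only the splitting of the given extension), at the cost of invoking the surjectivity of the period map, i.e.\ an Eichler--Shimura-type fact; the paper's Liouville-style argument avoids that but leans on the (terser) claim that decomposability would force finite image. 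The remaining points --- $\kappa(T)=0$ hence $\rho(T)=\rho_0(T)\oplus 1$ is diagonalizable of finite order, and holomorphy at the cusps from boundedness of the integrals of cusp forms --- match the paper's intent and are fine.
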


\begin{proof}
Consider the modular differential equation 
\begin{equation}\label{eq:mde}
L\theta[f]=0
\end{equation} 
where $\theta=q\frac{d}{dq}=\frac{1}{2\pi i}\frac{d}{d\tau}$. Since $\theta u_k=f_k$ for each $k$, the solution space of \eqr{mde} is spanned by the components of $F$, so by the covariance of the modular derivative $F$ is a vector-valued modular function for some representation $\rho:\G\rightarrow \GL_{g+1}(\bb{C})$. Note, however, that $\theta F=(F_0,0)^t$ is a weight two vector-valued cusp form for the same representation, so in fact the upper left $g\times g$ corner of $\rho$ is actually $\rho_0$ and the quotient $\rho/\rho_0$ is the trivial representation. $F$ is clearly holomorphic and not constant. Since $F$ also has weight zero, it must be that $[\G:\ker\rho]=\infty$. This implies that $\rho$ is indecomposable, otherwise it would factor through $G = \ker \rho_0$, a finite-index subgroup. Finally, note that the action of $T$ on $F(\tau)$ shows that $\rho(T)$ is diagonalizable, as can be seen, for example, by writing down the $q$-expansion of each $f_k$.  
\end{proof}

Now for $1\leq j\leq 2g$ we have
\begin{equation}
\label{equation:mainPeriodComputation}
\begin{aligned}
\left(\Omega_{j1}, \ldots, \Omega_{jk},1\right) &= F(g_j(i\infty)) \\
 &= \rho(g_j)F(i\infty) \\
 &= \rho(g_j)(0,\ldots,0, 1)
\end{aligned}
\end{equation}
by Theorem \ref{thm:MainPeriodThm}, and thus the period $\Omega_{jk}$ is equal to the $(k,g+1)$-entry of $\rho(g_j)$. This provides what seems to be a novel method for computing periods of modular curves. In particular, this method applies equally well to noncongruence or congruence subgroups, as it does not require the use of Hecke operators. Taking advantage of the classification of low-dimension representations of the modular group, we obtain the following result, which may be of independent interest.
\begin{thm}
\label{thm:algebraicityOfPeriods}
Let $G\subseteq \G$ be a subgroup containing a finite index normal subgroup $G'\triangleleft \G$ of genus one or two. Then the period matrix $P(X_G)$ has entries in $\overline{\mathbb{Q}}$. 
\end{thm}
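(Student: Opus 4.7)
The plan is to apply Theorem \ref{thm:MainPeriodThm} to the normal subgroup $G' \triangleleft \Gamma$ and exploit the fact that the associated representation $\rho: \Gamma \to \GL_{g+1}(\CC)$ (with $g = g(X_{G'}) \in \{1,2\}$) is, by Marks-Mason \cite[Sec 4]{MarksMason} in dimension two and by Theorem \ref{thm:unitmain} in dimension three, equivalent to a standard representative $\rho_{\mathrm{std}}$ with $\rho_{\mathrm{std}}(S), \rho_{\mathrm{std}}(T) \in \GL_{g+1}(\overline{\mathbb{Q}})$. First I would pick a basis $\{f_1,\ldots,f_g\}$ of $S_2(G')$ realizing $\rho_0 = \rho_{0,\mathrm{std}}$, and absorb the scalar ambiguity coming from $\mathrm{End}_\Gamma(\rho_{0,\mathrm{std}})$ into the basis, so that the resulting extension $\rho = \left(\begin{smallmatrix} \rho_{0,\mathrm{std}} & \kappa \\ 0 & 1 \end{smallmatrix}\right)$ has cocycle of the shape $\kappa = \kappa_{\mathrm{std}} + (\rho_{0,\mathrm{std}} - I)\,v$ for some (possibly transcendental) vector $v \in \CC^g$.

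Next, I would identify $S_2(G)$ with the $G$-invariant subspace $S_2(G')^{G}$ and choose a basis $\tilde f_j = \sum_i c_{ij}\, f_i$ ($j = 1,\ldots, n$, $n = g(X_G)$) with $c_{ij} \in \overline{\mathbb{Q}}$. This is possible because, with $\rho_0 = \rho_{0,\mathrm{std}}$ in algebraic form, the invariance conditions $\mathbf c_j^T (\rho_{0,\mathrm{std}}(\gamma) - I) = 0$ for $\gamma$ in a finite generating set of $G$ constitute a linear system over $\overline{\mathbb{Q}}$. Applying \cite[Prop 1.4]{Manin} to $G$, choose hyperbolic $\gamma_1,\ldots,\gamma_{2n} \in G$ whose paths $\{i\infty, \gamma_k(i\infty)\}$ give an oriented symplectic basis of $H_1(X_G, \ZZ)$. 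Exactly as in \eqref{equation:mainPeriodComputation} the entries of $A(\Omega)$ and $B(\Omega)$ can then be computed as
$$
\int_{i\infty}^{\gamma_k(i\infty)} \tilde f_j\, dz \;=\; \sum_i c_{ij}\,\kappa(\gamma_k)_i.
$$

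The main step---and what I expect to be the principal obstacle---is the cancellation of the potentially transcendental shift $v$. Using $\mathbf c_j^T (\rho_{0,\mathrm{std}}(\gamma_k) - I) = 0$ for $\gamma_k \in G$, we obtain
$$
\sum_i c_{ij}\, \kappa(\gamma_k)_i \;=\; \sum_i c_{ij}\, \kappa_{\mathrm{std}}(\gamma_k)_i \;+\; \mathbf c_j^T (\rho_{0,\mathrm{std}}(\gamma_k) - I)\,v \;=\; \sum_i c_{ij}\, \kappa_{\mathrm{std}}(\gamma_k)_i.
$$
Since $\gamma_k$ is a word in $S, T$, we have $\rho_{\mathrm{std}}(\gamma_k) \in \GL_{g+1}(\overline{\mathbb{Q}})$ and in particular each $\kappa_{\mathrm{std}}(\gamma_k)_i \in \overline{\mathbb{Q}}$. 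Thus the matrices $A(\Omega), B(\Omega)$ have algebraic entries; the Riemann bilinear relations guarantee that $A(\Omega)$ is invertible, and we conclude that $P(X_G) = B(\Omega)A(\Omega)^{-1}$ has entries in $\overline{\mathbb{Q}}$. The key point is that, although the passage from $\rho$ to its standard algebraic form may involve a transcendental coboundary shift, this ambiguity is invisible to the $G$-invariant observables that actually control the period matrix of $X_G$.
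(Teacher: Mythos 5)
Your proof is correct and follows the same overall strategy as the paper: normalize $\rho_0$ to the algebraic standard form supplied by the classification, observe that the extension $\rho$ of Theorem \ref{thm:MainPeriodThm} is equivalent to a representation with entries in $\overline{\mathbb{Q}}$, and read the periods off the last column of $\rho(\gamma_k)$ as in \eqref{equation:mainPeriodComputation}. Where you go beyond the paper is the coboundary issue, and this is a genuine refinement rather than a detour: the paper simply asserts that the entries of $\rho$ itself are algebraic, whereas the classification only gives this up to an equivalence $\smalltwobytwo{A}{v}{0}{1}$ whose translation part $v$ could a priori be transcendental (the cocycle actually produced by the Eichler integrals involves values such as $\int_{i\infty}^{0}f_k(z)\,dz$). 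Your observation that the $G$-invariance relations $\mathbf{c}_j^{T}(\rho_{0}(\gamma_k)-I)=0$ annihilate the coboundary term precisely on the hyperbolic elements $\gamma_k\in G$ that compute the periods disposes of this cleanly; an alternative closer to what the paper implicitly relies on is that $\kappa(T)=0$ together with the diagonalizability of $\rho(T)$ forces $(\rho_0(T)-I)v=0$, and $1$ is not an eigenvalue of $\rho_0(T)$ in the genus one or two cases, so in fact $v=0$ and only the scalar ambiguity (which cancels in $B(\Omega)A(\Omega)^{-1}$) survives. You also make explicit a point left implicit in the paper, namely that a basis of $S_2(G)=S_2(G')^{G}$ can be taken with coefficients $c_{ij}\in\overline{\mathbb{Q}}$ because the invariance conditions form a linear system over $\overline{\mathbb{Q}}$ once $\rho_0$ is in algebraic form. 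The only cosmetic caveat: \cite[Prop 1.4]{Manin} yields an oriented basis of $H^1(X_G,\ZZ)$, not necessarily a symplectic one, but passing to a symplectic basis is an $\mathrm{Sp}_{2g}(\ZZ)$-change of coordinates and does not affect the field of definition, as the paper notes at the end of its proof.
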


\begin{proof}
Let $\rho_0: \Gamma \rightarrow \GL(S_2(G'))$ be the representation given by the $|_2$-action of $\G$ on $S_2(G')$. This representation factors through the finite group $\Gamma/G'$, and therefore we may choose a basis for $S_2(G')$ so that the entries of $\rho_0(T)$ are  algebraic. Since $\rho_0$ is one- or two-dimensional, it follows that all the entries of $\rho_0$ are algebraic (\cite{Mason}, \cite{MarksMason} for the two-dimensional case) as well. Let $\rho$ be the non-trivial extension of the trivial representation by $\rho_0$ given by Thm. \ref{thm:MainPeriodThm}. Then by the results of Section \ref{section:3DimIndecomposable} the entries of $\rho$ are also algebraic (this follows from \cite{MarksMason} if $\rho$ is two-dimensional). But by \eqref{equation:mainPeriodComputation}, the periods of $X_G$ with respect to the chosen basis of $S_2(G)\subseteq S_2(G')$ are linear combinations of entries of $\rho$, and thus they are algebraic. Therefore the entries of the period matrix of $X_{G}$, which do not depend on the choice of basis for $S_2(G)$, are always algebraic. Moreover, changing generators for $H^1(X_G,\ZZ)$ transforms $P(X_G)$ by a matrix in $\mathrm{Sp}_{2g}(\ZZ)$, which does not affect the field of definition of the entries of $P(X_G)$.
\end{proof}

\begin{cor}
\label{cor:CMtheorem}
Suppose $G\subseteq \G$ is of genus one and suppose it contains a normal subgroup $G'\triangleleft \Gamma$ of genus one or two. Then $X_{G}$ is an elliptic curve with complex multiplication. 
\end{cor}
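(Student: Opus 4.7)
The plan is to combine Theorem \ref{thm:algebraicityOfPeriods} with Schneider's classical transcendence theorem. Since $X_G$ has genus one, its period ``matrix'' is a single complex number $\tau \in \mathbb{H}$, and we may write $X_G \simeq \mathbb{C}/(\mathbb{Z}+\tau\mathbb{Z})$ as an elliptic curve over $\mathbb{C}$. Theorem \ref{thm:algebraicityOfPeriods} applies directly to yield $\tau \in \overline{\mathbb{Q}}$. The $j$-invariant of $X_G$, viewed as an elliptic curve, is exactly $j(\tau)$, where $j$ is the classical modular function.

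The second step is to observe that $j(\tau) \in \overline{\mathbb{Q}}$. The natural covering map $X_G \to X(1) = \mathbb{P}^1_j$ is ramified only over $\{0, 1728, \infty\}$, so it is a Belyi map. By Belyi's theorem, $X_G$ admits a model over $\overline{\mathbb{Q}}$, and in particular its $j$-invariant is algebraic. (For $G$ itself congruence, this reduces to the classical fact that modular curves of level $N$ are defined over cyclotomic fields, but since the hypothesis allows $G$ to be noncongruence, the general Belyi statement is what is needed.)

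The final step is to invoke Schneider's theorem (1937): if $\tau \in \mathbb{H}$ is algebraic and $j(\tau)$ is also algebraic, then $\tau$ is imaginary quadratic. Applying this in our setting, $\tau$ lies in an imaginary quadratic field, and it is standard that $E_\tau = \mathbb{C}/(\mathbb{Z}+\tau\mathbb{Z})$ then has complex multiplication by an order in $\mathbb{Q}(\tau)$. This gives the corollary. The proof is essentially a packaging argument, with the only real obstacle being the algebraicity of $j(X_G)$ in the noncongruence case: this is where Belyi's theorem is essential, and a reader interested in the congruence-only case can replace it with the classical theory of models of modular curves.
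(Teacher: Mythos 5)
Your proof is correct and follows essentially the same route as the paper: algebraicity of the period ratio from Theorem \ref{thm:algebraicityOfPeriods}, definability of $X_G$ over $\overline{\mathbb{Q}}$, and Schneider's theorem to conclude complex multiplication. The only difference is that you justify the claim that $X_G$ is defined over $\overline{\mathbb{Q}}$ explicitly via Belyi's theorem (needed in the noncongruence case), whereas the paper simply records this as well-known.
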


\begin{proof}
As is well-known, $X_{G}$ is defined over $\overline{\mathbb{Q}}$. Moreover, by Theorem \ref{thm:algebraicityOfPeriods}, the period ratio of $X_{G}$ is algebraic. But an elliptic curve defined over a number field with algebraic period ratio must necessarily have complex multiplication, by Schneider's Theorem. 
\end{proof}

\begin{rem}
\label{rmk:noCMhighergenus}
Corollary \ref{cor:CMtheorem} relies on special properties of two- and three-dimensional representations of $\G$ and it cannot be extended to include normal subgroups $G'$ of arbitrary genus $g$ (that is, $\rho$ of arbitrary dimensions $g+1$). For example $\Gamma_0(11)$ is of genus one, and it contains the normal subgroup $\Gamma(11)$, which is of genus 26. It is well known that $X_0(11)$ does not have complex multiplication (we thank Bill Hoffman for pointing this example out to the authors). 
\end{rem}

The field of definition of the entries of the period matrix $P(X_G)$ in Theorem \ref{thm:algebraicityOfPeriods} is of low degree, and it can be determined exactly, as in the following examples. 

\subsection{Example: a genus one normal subgroup}

According to \cite{CumminsPauli}, the genus one normal congruence subgroup of smallest index is $G:=\Gamma'$, the commutator subgroup of $\Gamma$. This is a subgroup of index 6 and level 6. In this case the representation $\rho_0$ of $\Gamma$ on $S_2(G)$ is one-dimensional, so any non-zero $f\in S_2(G)$ must be a weight 2 modular form for a character of $\Gamma$. Since $M_2(\chi^a) = 0$ unless $a=2$, we conclude that $\rho_0 = \chi^2$ and that $f$ is a multiple of $\eta^4$. By \cite{MarksMason}, up to isomorphism there is a unique 2-dimensional representation $\rho$ which is a non-trivial extension of the trivial representation by $\chi^2$. This is given by 
$$
\rho(T) = \twobytwo{\e{1/6}}{0}{0}{1}, \quad \rho(S) = \twobytwo{-1}{1}{0}{1}.
$$
Note that by Theorem \ref{theorem:2Dim} the generating weights for $\rho$ are $k_1=0$ and $k_2=2$, which indeed implies that there is a non-constant $\rho$-valued modular function $F$ and a $\rho$-valued modular form of weight two given by $\theta F = (f,0)$.  
 
According to SAGE, $G$ can be generated by $A=\smalltwobytwo{2}{1}{1}{1} = T^2ST$ and $B=\smalltwobytwo{3}{-1}{1}{0} = T^3S$, both hyperbolic matrices. Evaluating $\rho$ at these two matrices gives 
$$
\rho(A) = \twobytwo{1}{\e{1/3}}{0}{1}, \quad \rho(B) = \twobytwo{1}{-1}{0}{1}
$$
and thus the two periods of $X_{G}$ are $\Omega_1 = \e{1/3}$ and $\Omega_2 = -1$. The period ratio, which does not depend on a choice of basis for $S_2(G)$, is given by
$$
P(X_G) = -\e{1/3} = \e{1/6} \in \mathbb{Q}(\e{1/6}).
$$
This computation may also be checked numerically. Choosing $\eta^4 \in S_2(G)$ as generator and $i$ as a base-point for the period integral we find that
$$
P(X_G) = \frac{\int_{i}^{Bi} \eta^4(z)\,dz}{ \int_{i}^{Ai} \eta^4(z)\,dz} \sim 0.500000... + 0.866025...\,i,
$$
by numerically approximating the integrals using the first 4 terms of the series expansion
$$
\eta^4(\tau) = q^{1/6} - 4q^{7/6} + 2q^{13/6} + 8q^{19/6} + \ldots 
$$

We note here that $G=\G'$ is one of an infinite family of genus one subgroups that are normal in $\G$, as described in \cite{Newman}. All but four of these are noncongruence, and one may compute the periods for all of these subgroups using the above technique. We leave this computation to a future publication.

\subsection{Example: the genus two normal subgroup}
\label{example:genus2}

There is only one genus two normal subgroup $G\triangleleft\G$, a congruence subgroup of index 48 and level 8, denoted by $8A^2$ in \cite{CumminsPauli}. In this case the representation $\rho_0$ of $\Gamma$ on $S_2(G)$ is two-dimensional. It cannot be reducible, for otherwise it would be a direct sum of characters each having level dividing 6. Moreover if $\rho_0$ factored through $\Gamma(4)$ then the cusp forms in $S_2(G)$ would give cusp forms in $S_2(\Gamma(4))$, which is impossible since $\Gamma(4)$ has genus zero. Therefore $\rho_0$ has level 8 and by the tables of \cite{Mason} we deduce that the only such representation possessing weight two modular forms is the one whose eigenvalues at $\rho_0(T)$ are $\e{1/8}$ and $\e{3/8}$, and whose generating weights are $(2,4)$. According to formula \eqref{equation:formulaForS} in our classification we must have 
$$
\rho(T) = \left(\begin{array}{ccc}
\e{1/8} & 0 & 0 \\ 
0 & \e{3/8} & 0 \\ 
0 & 0 & 1
\end{array}\right) , \quad 
\rho(S) = \left(\begin{array}{ccc}
-1/\sqrt{2} & -1 + 1/\sqrt{2}  & 1 \\ 
-1 - 1/\sqrt{2} & 1/\sqrt{2} & 1 \\ 
0 & 0 & 1
\end{array}\right).
$$ 
Note that by Theorem \ref{thm:Irreducible2DimSubFactor} the generating weights of $\rho$ are $(0,2,4)$, so there is a holomorphic non-constant $\rho$-valued modular function, as expected.   

Using SAGE, we find that $G$ can be generated by 
\begin{equation}
\label{equation:genus2Generators}
\begin{aligned}
\twobytwo{1}{8}{0}{1}, \;\twobytwo{15}{-4}{4}{-1}, \; \twobytwo{21}{-16}{4}{-3}, \;\twobytwo{19}{-24}{4}{-5}, \;  \twobytwo{25}{-44}{4}{-7}, \\
\twobytwo{29}{-80}{4}{-11},  \; \twobytwo{27}{-88}{4}{-13}, \;  \twobytwo{31}{-132}{4}{-17}, \;  \twobytwo{23}{-52}{4}{-9}. 
\end{aligned}
\end{equation}
Evaluating $\rho$ at each generator $A$ gives the following list of $\mathbb{R}$-linearly independent periods, appearing in the $(1,3)$- and $(2,3)$- entries of $\rho(A)$ (and rescaled by $c=-2$): 
$$
\Omega_1 = \left(\begin{array}{c}
1 \\ 
1
\end{array} \right), \quad 
\Omega_2 = \left(\begin{array}{c}i \\ 
-i
\end{array} \right),\quad
\Omega_3 = \left(\begin{array}{c}
\e{1/8} \\ 
\e{3/8}
\end{array} \right), \quad 
\Omega_4 = \left(\begin{array}{c}
\e{3/8}  \\ 
\e{1/8}
\end{array} \right).
$$
To compute the period matrix in Siegel's upper half-space $\mathbb{H}_2$, let  
$$
P_1 = \twobytwo{1}{i}{1}{-i}, \quad P_2 = \twobytwo{\e{1/8}}{\e{3/8}}{\e{3/8}}{\e{1/8}}.
$$
The period matrix of $X_G$ is then given by 
$$
P(X_G) = P_2\,P_1^{-1} = \twobytwo{\e{1/8}}{0}{0}{\e{3/8}} \in \mathbb{H}_2,
$$
with entries in $\mathbb{Q}(\e{1/8}) \subseteq \overline{\mathbb{Q}}$, as predicted by Theorem \ref{thm:algebraicityOfPeriods}. To `verify' the computation of the entries of $P_G$ numerically, note that we may choose 
\begin{align*}
f_1 &= \eta^4 \left(\frac{1728}{j}\right)^{-\frac{1}{24}}\ _2F_1\left(-\frac {1}{24}, \frac{7}{24}; \frac {3}{4}; \frac{1728}{j}\right)= q^{1/8} - q^{9/8} - 6 q^{17/8} + 5 q^{25/8}  + \ldots  \\
 f_2 &= \eta^4 \left(\frac{1728}{j}\right)^{\frac{1}{3}}\ _2F_1\left(\frac 13, \frac{2}{3}; \frac 32; \frac{1728}{j}\right) = q^{3/8} - 3 q^{11/8} + q^{19/8} + 2 q^{27/8} + \ldots
\end{align*}
as a basis for $S_2(G)$ (\cite{FrancMason1}). If we let 
$$
P_1' = \twobytwo{\int_i^{A_2i} f_1(z)\,dz }{\int_i^{A_3i} f_1(z)\,dz }{\int_i^{A_2i} f_2(z)\,dz}{\int_i^{A_3i} f_2(z)\,dz},\quad  P_2' = \twobytwo{\int_i^{A_4i} f_1(z)\,dz }{\int_i^{A_5i} f_1(z)\,dz }{\int_i^{A_4i} f_2(z)\,dz}{\int_i^{A_5i} f_2(z)\,dz},
$$
where $A_j$ is the $j$-th element of the list of generators \eqref{equation:genus2Generators}, then 
$$
P(X_G) = P_2'(P_1')^{-1} \sim \twobytwo{0.707107... + 0.707107...i}{0}{0}{-0.707107... + 0.707107...i}
$$
by numerically approximating the integrals using the first 150 terms of the  series expansion for $f_1,f_2$. 

\subsection{Example: a genus one non-normal subgroup}
Theorem \ref{thm:MainPeriodThm} may be applied equally well to modular curves for groups that are not normal in $\G$, as we demonstrate in this example. From \cite{CumminsPauli}, we see that there is a genus one non-normal subgroup $G_0 = 8D^1$ which contains the genus two subgroup $G=8A^2$ from the previous example as a normal subgroup. The group $G_0$ is of level 8 and index 24, and can be generated by the matrices
\begin{equation}
\label{equation:G0generators}
\twobytwo{0}{-1}{1}{4}, \;\twobytwo{8}{21}{3}{8}, \; \twobytwo{-12}{-29}{5}{12}, \;\twobytwo{-8}{-13}{5}{8}, \;  \twobytwo{4}{5}{3}{4}, 
\twobytwo{4}{-1}{1}{0}. 
\end{equation}
Now since $S_2(G_0)\subseteq S_2(G)$, we may pick a basis for $S_2(G)$ by first choosing a generator $f_1$ for the 1-dimensional space $S_2(G_0)$ and extending this choice to a basis $\{f_1,f_2\}$ for $S_2(G)$. Then for all $g_0\in G_0$ we must have
$$
\rho(g_0) \sim \left(\begin{array}{ccc}
1 & 0 & \Omega_1(g_0)\\ 
* & * & * \\ 
0 & 0 & 1
\end{array}\right),
$$
where $\rho$ is as in Example \ref{example:genus2} and $\Omega_1(g_0)$ is the period of $f_1$ corresponding to $g_0\in G_0$.

To put $\rho$ into this form, we may change basis via the matrix 
$$
A = \left(\begin{array}{ccc}
3+2\sqrt{2} & 0 & 0\\ 
1 & 1 & 0 \\ 
0 & 0 & 1
\end{array}\right).
$$
Evaluating $A^{t}\rho A^{t\,-1}$ at each of the 4 generating hyperbolic matrices for $G_0$ we get the periods 
$$
\Omega(g_0) = 2(2+\sqrt{2}), \;4i(1+\sqrt{2}),\; -2(2+\sqrt{2})
$$
which generate the lattice $\langle 2+\sqrt{2},\;2i(1+\sqrt{2}) \rangle\subseteq \CC$. The period ratio for the modular curve $X_{G_0}$ is therefore
$$
P(X_{G_0}) = \sqrt{2}i \in \mathbb{Q}(\sqrt{-2}).
$$
To check this computation numerically, note that
$$
f = (1+\sqrt{2})\,f_1 + 2\,f_2 = (1+\sqrt{2}) q^{1/8} + 2\,q^{3/8} - (1+\sqrt{2})  q^{9/8} + \ldots 
$$
with $f_1,f_2$ as in Example \ref{example:genus2}, is a generator for $S_2(G_0)$. We may approximate the period ratio using the first 150 terms of the $q$-series expansion of $f$ to get 
$$
P(X_{G_0}) = \frac{\int_i^{A_1i} f(z)\,dz}{\int_i^{A_6i} f(z)\,dz} \sim 1.41421... i,
$$
where $A_j$ denotes the $j$-th matrix in the list of generators.

\renewcommand\refname{References}
\bibliographystyle{alpha}
\bibliography{Dim3Weights}

\begin{thebibliography}{FM14b}

\bibitem[CF16]{CandeloriFranc}
Luca Candelori and Cameron Franc.
\newblock Vector valued modular forms and the modular orbifold of elliptic
  curves.
\newblock {\em Int. J. of Num. Th.}, 2016.

\bibitem[CF17]{Genus0}
Luca Candelori and Cameron Franc.
\newblock Vector bundles and modular forms for fuchsian groups of genus zero.
\newblock 2017.

\bibitem[CP03]{CumminsPauli}
C.~J. Cummins and S.~Pauli.
\newblock Congruence subgroups of {${\rm PSL}(2,{\Bbb Z})$} of genus less than
  or equal to 24.
\newblock {\em Experiment. Math.}, 12(2):243--255, 2003.

\bibitem[EZ85]{EZ}
Martin Eichler and Don Zagier.
\newblock {\em The theory of {J}acobi forms}, volume~55 of {\em Progress in
  Mathematics}.
\newblock Birkh\"auser Boston Inc., Boston, MA, 1985.

\bibitem[FK80]{Farkas-Kra}
Hershel~M. Farkas and Irwin Kra.
\newblock {\em Riemann surfaces}, volume~71 of {\em Graduate Texts in
  Mathematics}.
\newblock Springer-Verlag, New York-Berlin, 1980.

\bibitem[FM14a]{FrancMason1}
Cameron Franc and Geoffrey Mason.
\newblock Fourier coefficients of vector-valued modular forms of dimension 2.
\newblock {\em Canad. Math. Bull.}, 57(3):485--494, 2014.

\bibitem[FM14b]{FrancMason2}
Cameron Franc and Geoffrey Mason.
\newblock Hypergeometric series, modular linear differential equations and
  vector-valued modular forms.
\newblock {\em The Ramanujan Journal}, pages 1--35, 2014.

\bibitem[KM03]{KM}
Marvin Knopp and Geoffrey Mason.
\newblock Generalized modular forms.
\newblock {\em J. Number Theory}, 99(1):1--28, 2003.

\bibitem[Man72]{Manin}
Ju.~I. Manin.
\newblock Parabolic points and zeta functions of modular curves.
\newblock {\em Izv. Akad. Nauk SSSR Ser. Mat.}, 36:19--66, 1972.

\bibitem[Mas07]{M1}
Geoffrey Mason.
\newblock Vector-valued modular forms and linear differential operators.
\newblock {\em Int. J. Number Theory}, 3(3):377--390, 2007.

\bibitem[Mas08]{Mason}
Geoffrey Mason.
\newblock 2-dimensional vector-valued modular forms.
\newblock {\em Ramanujan J.}, 17(3):405--427, 2008.

\bibitem[MM10]{MarksMason}
Christopher Marks and Geoffrey Mason.
\newblock Structure of the module of vector-valued modular forms.
\newblock {\em J. Lond. Math. Soc. (2)}, 82(1):32--48, 2010.

\bibitem[New64]{Newman}
Morris Newman.
\newblock A complete description of the normal subgroups of genus one of the
  modular group.
\newblock {\em Amer. J. Math.}, 86:17--24, 1964.

\bibitem[Sel65]{Selberg}
Atle Selberg.
\newblock On the estimation of {F}ourier coefficients of modular forms.
\newblock In {\em Proc. {S}ympos. {P}ure {M}ath., {V}ol. {VIII}}, pages 1--15.
  Amer. Math. Soc., Providence, R.I., 1965.

\bibitem[Tho89]{Thompson}
J.~G. Thompson.
\newblock Hecke operators and noncongruence subgroups.
\newblock In {\em Group theory ({S}ingapore, 1987)}, pages 215--224. de
  Gruyter, Berlin, 1989.
\newblock Including a letter from J.-P. Serre.

\bibitem[Zhu96]{Zhu}
Yongchang Zhu.
\newblock Modular invariance of characters of vertex operator algebras.
\newblock {\em J. Amer. Math. Soc.}, 9(1), January 1996.

\end{thebibliography}
\end{document}